   \def\overbracket#1{\mathop{\vbox{\ialign{##\crcr\noalign{\kern3\p@}
         \downbracketfill\crcr\noalign{\kern3\p@\nointerlineskip}
         $\hfil\displaystyle{#1}\hfil$\crcr}}}\limits}
   \def\underbracket#1{\mathop{\vtop{\ialign{##\crcr
         $\hfil\displaystyle{#1}\hfil$\crcr\noalign{\kern3\p@\nointerlineskip}
         \upbracketfill\crcr\noalign{\kern3\p@}}}}\limits}
   \def\overparenthesis#1{\mathop{\vbox{\ialign{##\crcr\noalign{\kern3\p@}
         \downparenthfill\crcr\noalign{\kern3\p@\nointerlineskip}
         $\hfil\displaystyle{#1}\hfil$\crcr}}}\limits}
   \def\underparenthesis#1{\mathop{\vtop{\ialign{##\crcr
         $\hfil\displaystyle{#1}\hfil$\crcr\noalign{\kern3\p@\nointerlineskip}
         \upparenthfill\crcr\noalign{\kern3\p@}}}}\limits}
   \def\downparenthfill{$\m@th\braceld\leaders\vrule\hfill\bracerd$}
   \def\upparenthfill{$\m@th\bracelu\leaders\vrule\hfill\braceru$}
   \def\upbracketfill{$\m@th\makesm@sh{\llap{\vrule\@height3\p@\@width.7\p@}}%
     \leaders\vrule\@height.7\p@\hfill
     \makesm@sh{\rlap{\vrule\@height3\p@\@width.7\p@}}$}
   \def\downbracketfill{$\m@th
     \makesm@sh{\llap{\vrule\@height.7\p@\@depth2.3\p@\@width.7\p@}}%
     \leaders\vrule\@height.7\p@\hfill
     \makesm@sh{\rlap{\vrule\@height.7\p@\@depth2.3\p@\@width.7\p@}}$}
\let\oldtocsection=\tocsection
\let\oldtocsubsection=\tocsubsection
\let\oldtocsubsubsection=\tocsubsubsection
\renewcommand{\tocsection}[2]{\hspace{0em}\oldtocsection{#1}{#2}}
\renewcommand{\tocsubsection}[2]{\hspace{1em}\oldtocsubsection{#1}{#2}}
\renewcommand{\tocsubsubsection}[2]{\hspace{2em}\oldtocsubsubsection{#1}{#2}}
\numberwithin{subsection}{section}
\numberwithin{subsubsection}{subsection}
\numberwithin{theorem}{section}
\numberwithin{equation}{section} % Une équation p.q. est la q \`eme équation de la section p.
\newcommand{\ssk}{\smallskip}
\renewcommand{\epsilon}{\varepsilon}
\newcommand\bbR{\mathbb{R}}
\titleformat{\section}[block]
{\filcenter\normalfont\sffamily\bfseries\Large}
{{\hspace{-0.7cm}}\thesection \hspace{0.2em} --\vspace{0.3cm}}{0.5em}{}
\titleformat{\subsection}[block]
{\filcenter\normalfont\sffamily\bfseries\large}  						  % Without \filcenter
{\hspace{-0.7cm}\thesubsection \hspace{0.5em}--\vspace{0.3cm}}{.5em}{}  %\hspace{-1cm}
\titlespacing{\subsection}{-0pc}{1.5ex plus .1ex minus .2ex}{0pc}
\titleformat{\subsubsection}[block]
{\normalfont\sffamily\bfseries}{\hspace{-1cm}\thesubsubsection}{.5em}{}
\titlespacing{\subsubsection}{15pc}{1ex plus .1ex minus .2ex}{1pc}
\newtheoremstyle{mystyle}
{3pt}               %space above
{3pt}               %space below
{\it }                      %bodyfont
{}                      %indent
{\sffamily\bfseries}             %headfont
{}                      %punctuation
{0.5em}                 %space after head
{\llap{#2. }#1{$\;$ --}}
\theoremstyle{mystyle}
\newtheorem{thm}{Theorem}[section]
		\newtheorem*{thm*}{Theorem}
\newtheorem{cor}[thm]{\hspace{-0.13cm} {Corollary}}  % [section]
\newtheorem{lem}[thm]{\hspace{-0.15cm} {Lemma}}  %[section]
\newtheorem{prop}[thm]{\hspace{-0.16cm} {Proposition}}   %[chapter]
\newtheoremstyle{mystyle2}
{3pt}               %space above
{3pt}               %space below
{\it }                      %bodyfont
{}                      %indent
{\sffamily\bfseries}             %headfont
{}                      %punctuation
{0.5em}                 %space after head
{\llap{#2 }#1{\hspace{0.2cm}--}}
\theoremstyle{mystyle2}
\newtheorem*{definition*}{Definition}
\newtheorem*{theorem*}{Theorem}
\newtheorem*{Remark*}{Remark}
\newtheorem*{lem*} {Lemma}
\newtheorem*{defn*} {Definition}
\newtheorem*{prop*} {Proposition}
\newtheorem*{cor*} {Corollary}
\newcommand\restr[2]{{% we make the whole thing an ordinary symbol
  \left.\kern-vlldelimiterspace % automatically resize the bar with \right
  #1 % the function
  \vphantom{\big|} % pretend it's a little taller at normal size
  \right|_{#2} % this is the delimiter
  }}
\newcommand{\R}{\mathbb{R}}
\newcommand{\rX}{\mathbf{X}}
\newcommand{\XX}{\mathbb{X}}
\newcommand{\norm}[1]{\left\|#1\right\|}
\newcommand{\normhh}[1]{\|#1\|_{2\alpha}}
\newcommand{\normhb}[1]{\|#1\|_{\beta}}
\newcommand{\normhhb}[1]{\|#1\|_{2\beta}}
\newcommand{\normh}[1]{\|#1\|_{\alpha}}
\newcommand{\normsup}[1]{\|#1\|_{\infty}}
\newcommand{\CO}{C^{0}([0,T],\R^d)}
\newcommand\abs[1]{| #1 |}
\newcommand{\normDX}[1]{\left\|#1\right\|_{\alpha,2\alpha}}
\newcommand{\CD}{C_b^{1,0}}
\newcommand{\normCD}[1]{\|#1\|_{\CD}}
\begin{document}

\begin{center}
{\Huge\sffamily{Young and rough differential inclusions}   \vspace{0.5cm}}
\end{center}
\vskip 5ex minus 1ex

\begin{center}
{\sf I. BAILLEUL, A. BRAULT and L. COUTIN}
\end{center}

\vspace{1cm}

\begin{center}
\begin{minipage}{0.9\textwidth}
  \renewcommand\baselinestretch{0.7} 
  {\small \textbf{\textsf{\noindent Abstract.}} \textit{We define in this work a notion of Young differential inclusion 
$$
dz_t \in F(z_t)dx_t,
$$
for an $\alpha$-H\"older control $x$, with $\alpha>1/2$, and give an existence result for such a differential system. As a by-product of our proof, we show that a bounded, compact-valued, $\gamma$-H\"older continuous set-valued map on the interval $[0,1]$ has a selection with finite $p$-variation, for $p>1/\gamma$. We also give a notion of solution to the rough differential inclusion 
$$
dz_t \in F(z_t)dt + G(z_t)d{\bf X}_t,
$$
for an $\alpha$-Hölder rough path $\bf X$ with $\alpha\in
\left(\frac{1}{3},\frac{1}{2}\right]$, a set-valued map $F$ and  a single-valued one form $G$. Then, we prove the existence of a solution to the inclusion when $F$ is bounded and lower semi-continuous with compact values, or upper semi-continuous with compact and convex values.}}
\end{minipage}
\end{center}

\vspace{1cm}

\section{Introduction}
\label{SectionIntro}
%------------------------%
\subsection{Setting}
One of the motivations for considering differential inclusions comes from the study of differential equations with discontinuous coefficients. In the setting of a possibly time-dependent widely discontinuous vector field on $\mathbb{R}^d$, it makes sense to replace the dynamical prescription 
\begin{align*}
\dot{z}_t = f(t,z_t),
\end{align*}
by 
\begin{align*}
\dot{z}_t \in F(t,z_t),
\end{align*}
where $F(t,z)$ is here the closed set of cluster points ot $F(s,w)$, as $(s,w)$ converges to $(t,z)$. This somehow accounts for the impossiblity to make measurements with absolute precision. It also makes sense to take for $F(t,z)$ the convex hull of the former set. A set-valued application $F$ is a map from $[0,T] \times {\mathbb R}^d $ into the power set of ${\mathbb R}^d$. Different natural set-valued extensions of $f$ may have different regularity properties; in any case, switching from the differential equation prescription to the differential inclusion formulation somehow allows to account for the uncertainty in the modeling. A Caratheodory solution of the differential inclusion 
\begin{align}\label{intro-1}
\dot{z}_t \in F(t,z_t),~~t\in [0,T],~~z_0 = \xi \in {\mathbb R}^d
\end{align}
is an absolutely continuous path $z$ started from $\xi$, whose derivative $\dot z$ satisfies
\begin{align*}
\dot{z}_t \in F(t,z_t)
\end{align*}
at almost all times $t\in [0,T]$. Existence of solutions of
differential inclusions and their properties were widely studied; see
for instance J.P. Aubin and A. Cellina's book \cite{AubinCellina} for
an authoritative pedagogical treatment. Equation \eqref{intro-1} has
at least one solution under two  kinds of assumptions on the set
valued map $F$: \textcolor{black}{either $F$ is upper semicontinuous
  with compact convex values, or $F$ is bounded and lower
  semicontinuous with compact values.} Recall that a set-valued function $F$ on  $[0,T]\times\mathbb{R}^d$ is said to be \textit{upper semicontinuous} if one can associate to every spacetime point $(s,w)$ an open neighbourhood $U$ of $(s,w)$ and an open neighbourhood $V$ of $F(s,w)$ such that $F(U)\subset V$, and $F$ is said to be \textit{lower semicontinuous} if for any $(s,w)$, any $w'\in F(s,w)$ and any neighbourhood $V(w')$ of $w'$, there exists a neighbourhood $U(s,w)$ of $(s,w)$ such that $F(t,x)\cap V(w')\neq\emptyset$, for all $(t,x)\in U(s,w)$. The existence proofs for solutions of the differential inclusion \eqref{intro-1} generally rely on either approximation schemes or a fixed point reformulation.

\smallskip

Differential equations, and their extensions into differential inclusions, are not the only kind of natural dynamics. Some extensions to stochastic cases where also studied, after the pioneering works of Aubin-da Prato \cite{AD90, AD95, AD98, ADF2000} and Kisielewicz \cite{Kisielwicz1,Kisielwicz2,Kisielwicz3}. The works  \cite{AD90, AD95, AD98, ADF2000} were essentially motivated by studying viability questions in a stochastic setting. M. Kisielewicz in \cite{Kisielwicz1} defined the notion of solution and obtained the existence of stochastic differential inclusions of the form 
\begin{align}\label{intro-2}
X_t -X_s \in \int_s^t F(r,X_r) dr + \int_s^t G(r,X_r) d W_r
\end{align}
where $W$ is an $\bbR^\ell$-valued Brownian motion, $F$ is a lower semicontinuous set-valued random map with values in ${\mathbb R}^d$ and $G$ is a lower semicontinuous set-valued map with values in $L({\mathbb R}^d,\mathbb{R}^\ell)$. In \cite{Kisielwicz2}, M. Kisielewicz also studied the case where equation \eqref{intro-2} also contains an additional compound Poisson random measure term; the case of semimartingale drivers was also investigated. All proofs fundamentally rely on the isometry property of stochastic integration with respect to Brownian motion or compound Poisson measures. There are however a number of interesting non-semimartingale random processes of practical relevance, such as Mandelbrot's fractional Brownian motion \cite{Mandelbrot68} or random Fourier series \cite{FrizGuliRiedel}. Their sample paths are  $\alpha$-H\"older continuous, for  a regularity exponent $\alpha \in (0,1).$ One can use T. Lyons' work \cite{Lyons94} to solve  differential equations driven by a fractional Brownian motion, for $\alpha >1/2$; it relies on the notion of Young integral \cite{Young}. The case $1/4\leq \alpha \leq 1/2$ is much more involved and can be handled using Lyons' theory of rough paths \cite{Lyons98}. In \cite{Levakov15}, A. Levakov and M. Vas'kovskii obtain the existence of solutions to stochastic differential inclusions of the form
\begin{align*}
X_t -X_s \in \int_s^t F(r,X_r) dr + \int_s^t G_1(r,X_r) d W_r+ \int_s^t G_2(r,X_r) d W_r^{\textsc{FBM}}
\end{align*}
where $W^{\textsc{FBM}}$ is a fractional Brownian motion with $\alpha$-H\"older continuous sample paths, with $\alpha >1/2$, and the set valued map $G_2$ is a globally Lipschitz function of $x$ that takes values in the set of nonempty compact \textit{convex} subsets of $\mathbb{R}^d$ and satisfies a local $\delta$-H\"older condition as a function of time, with $\delta >1-\alpha$.

\subsection{Young differential inclusions} 
Motivated by control problems, the first aim of this article is to define and prove the existence of solutions of Young differential inclusions 
\begin{align}\label{intro4}
z_t= \xi + \int_0^t v_r \,dx_r,\quad
v_r\in  F(z_r),
\end{align}
where $x$ is an ${\mathbb R}^\ell$-valued  $\alpha$-H\"older continuous control, with $\alpha >1/2$, and $F$ is a  $\gamma$ -H\"older continuous set valued map with compact values, for a regularity exponent $\gamma\in\left(\frac{1}{\alpha} -1,1\right)$. {We do not require that $F$ takes values in convex sets}. The notion of a solution to a Young differential inclusion involves
a number of elementary results on Young integrals that are recalled in
Appendix \ref{SectionAppendix}.\\

\noindent \emph{Notations.} We gather here a number of notations that are used throughout the work.   \vspace{0.15cm}

\begin{itemize}
   \item For any integer $m$, a point $a\in \R^{m}$ and a positive real number $R$, we denote by $B(a,R)$ the closed ball of center $a$ and radius $R$ in $\R^{m}$.   \vspace{0.1cm}
   \item For $\alpha\in (0,1]$ and $U,V$ Banach spaces, we denote by $C^\alpha(U,V)$ the space of $\alpha$-Hölder continuous functions from $U$ to $V$. We write $\norm{x}_\alpha$ for the $\alpha$-H\"older semi-norm of a path $x\in C^\alpha(U,V)$. We set $x_{s,t}:=x_t-x_s$, for $s,t\in U$.   \vspace{0.1cm} 
   \item Given a set $S$, we denote by $2^S$ the power set of $S$,  $\dot 2^S$ for $2^S\backslash\{\emptyset\}$ and $\mathcal{K}\big(S\big)$ the compact sets of $\dot 2^S$. We denote by $L(\bbR^\ell,\bbR^d)$ the space of linear maps from $\R^\ell$ to $\R^d$.
Recall that endowing the space $2^{L(\bbR^\ell,\bbR^d)}$ with the Hausdorff pseudo-metric turns the space $\mathcal{K}\big(L(\bbR^\ell,\bbR^d)\big)$ into a complete metric space. For $F:\R^d\rightarrow \mathcal{K}(L(\R^\ell,\R^d))$ a $\gamma$-Hölder
set-valued map, we denote again by $\norm{F}_\gamma$ the $\gamma$-Hölder semi-norm of
$F$.  (See Chapter 1, Section 5 in \cite{AubinCellina})\vspace{0.1cm}
 
   \item If $f$ is a single-valued map of $p$-variation, we denote by $\norm{f}_{p-\textrm{\emph{var}}}$ the $p$-variation semi-norm of $f$  -- more on this notation in Appendix \ref{SectionAppendix}.
\end{itemize}

\begin{defn*}
Let $x$ be an element of $C^{\alpha}([0,T],\R^\ell)$, with $\alpha\in \left(\frac{1}{2},1\right]$, and  $F : \bbR ^d\mapsto \dot 2^{L(\bbR^\ell,\bbR^d)}$ be a set-valued map. A solution to the \emph{Young differential inclusion}
\begin{equation}
\label{EqYDI}
dz_t \in F(z_t)dx_t,\quad z_0=\xi\in\R^d
\end{equation}
is a pair of paths $(z,v)$, defined on the time interval $[0,T]$,   \vspace{0.15cm}

\begin{itemize}
   \item with $v$ a $L(\R^\ell,\R^d)$-valued path of finite $p$-variation
     such that  $\alpha+\frac{1}{p}>1$,   \vspace{0.1cm}
   
   \item and for all  $0\leq t\leq T$,
     \begin{align*}
  v_t\in F(z_t)\quad \textrm{and}\quad      z_t = \xi+\int_0^tv_s\,dx_s.
     \end{align*}
\end{itemize}
\end{defn*}

The letter $v$ is chosen for "velocity". 
The integral $\int_0^tv_s\,dx_s$ makes sense as a Young integral under the assumption $\alpha+\frac{1}{q}>1$ -- see Appendix  ~\ref{SectionAppendix}.

\begin{thm}
\label{ThmMain}
Let a positive time horizon $T$ be given and $x\in C^\alpha([0,T],\R^\ell)$ with
$\alpha\in \left(\frac{1}{2},1\right]$. Let $F : \bbR ^d\mapsto
\mathcal{K}\big(L(\bbR^\ell,\bbR^d)\big)$ be a bounded set-valued map  with nonempty compact values and $\gamma$-H\"older, for a regularity exponent $\gamma\in\big(\frac{1}{\alpha}-1,1\big)$. Then, for any initial condition $\xi\in\bbR^d$, the Young differential inclusion 
\begin{align*}
dz_t \in F(z_t)dx_t,
\end{align*}
has a solution path started from $\xi$, defined over the time interval $[0,T]$.
\end{thm}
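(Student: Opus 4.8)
The plan is to construct self-consistent approximate solutions by a retarded (delayed) Euler scheme and then to pass to the limit by compactness. Fix a small delay $\eta>0$ and define a pair $(z^\eta,v^\eta)$ on $[0,T]$ recursively over the intervals $[k\eta,(k+1)\eta]$: on each such interval the velocity is required to be a selection of the set-valued map $t\mapsto F\big(z^\eta_{(t-\eta)^+}\big)$, which on $[0,\eta]$ is the constant compact set $F(\xi)$ and on $[k\eta,(k+1)\eta]$ with $k\ge 1$ only involves the already-constructed values of $z^\eta$ on $[(k-1)\eta,k\eta]$; one then sets $z^\eta_t=\xi+\int_0^t v^\eta_s\,dx_s$. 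The delay makes the scheme causal, so $(z^\eta,v^\eta)$ is well defined and satisfies $z^\eta_t=\xi+\int_0^t v^\eta_s\,dx_s$ and $v^\eta_t\in F\big(z^\eta_{(t-\eta)^+}\big)$ exactly for every $t$. The point of working with such self-consistent pairs, rather than with a fixed-point iteration, is that there is no mismatch of consecutive iterates to reconcile in the limit: the only defect is the delay $\eta$ in the argument of $F$, which will disappear as $\eta\to0$.

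Two uniform a priori bounds drive the argument. First, since $F$ is bounded we have $\|v^\eta\|_\infty\le\|F\|_\infty$, and the Young estimate recalled in Appendix~\ref{SectionAppendix} gives a uniform H\"older bound $\|z^\eta\|_\alpha\le C(\|F\|_\infty,\|x\|_\alpha)$; in particular the delayed paths $t\mapsto z^\eta_{(t-\eta)^+}$ are uniformly $\alpha$-H\"older, so the maps $t\mapsto F\big(z^\eta_{(t-\eta)^+}\big)$ are uniformly $\alpha\gamma$-H\"older in the Hausdorff metric, with constant at most $\|F\|_\gamma\,\|z^\eta\|_\alpha^{\gamma}$. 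Second, and this is the heart of the matter, the selection result announced in the abstract, applied to these $\alpha\gamma$-H\"older set-valued maps, produces the selections $v^\eta$ together with a uniform bound on their $p$-variation for any $p>1/(\alpha\gamma)$. The hypothesis $\gamma>\frac1\alpha-1$ is exactly what guarantees $1/(\alpha\gamma)<1/(1-\alpha)$, so that one may fix $p$ in the window $1/(\alpha\gamma)<p<1/(1-\alpha)$; this single choice both makes the selection result applicable and keeps $\alpha+\frac1p>1$, so that every Young integral appearing above is meaningful.

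With these bounds in hand I would extract a limit along $\eta\to0$. The family $\{z^\eta\}$ is bounded in $C^\alpha$, hence equicontinuous, so by Arzel\`a--Ascoli a subsequence converges uniformly to some $z\in C^\alpha$. The family $\{v^\eta\}$ is uniformly bounded and has uniformly bounded $p$-variation, so by the compactness of $p$-variation–bounded families in the $q$-variation topology for $q>p$ (Appendix~\ref{SectionAppendix}), a further subsequence converges in $q$-variation, for some $q$ with $p<q<\frac1{1-\alpha}$, to a limit $v$; the limit still has finite $p$-variation by lower semicontinuity of the $p$-variation. Since $\alpha+\frac1q>1$, the Young integral is continuous for this convergence, so passing to the limit in $z^\eta_t=\xi+\int_0^t v^\eta_s\,dx_s$ yields $z_t=\xi+\int_0^t v_s\,dx_s$. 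Finally $z^\eta_{(t-\eta)^+}\to z_t$ by uniform convergence and continuity of $z$, so from $v^\eta_t\in F\big(z^\eta_{(t-\eta)^+}\big)$, the continuity of $F$ and the closedness of its graph (compact values), one gets $v_t\in F(z_t)$ for every $t$; the pair $(z,v)$ is then the desired solution.

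The main obstacle is the uniform control of the $p$-variation of the velocities. Because $F$ is not assumed to have convex values, one cannot let $v^\eta$ converge only weakly: a weak-$*$ limit would in general be a selection of the convexified map $\overline{\mathrm{co}}\,F(z_\cdot)$ rather than of $F(z_\cdot)$ itself, and would solve only the relaxed inclusion. This is what forces strong (here, $q$-variation) convergence of the velocities, and hence the need for uniform $p$-variation bounds. It is precisely here that the selection result is indispensable: a greedy or piecewise-constant choice of selection is admissible pointwise but uncontrolled in $p$-variation, and may blow up as $\eta\to0$—for instance when the selected velocity drifts monotonically across a long run of intervals—whereas the selection result furnishes selections whose $p$-variation is governed by the \emph{uniform} H\"older constant of the set-valued map. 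Care is also needed in stitching the per-interval selections into a single path whose \emph{global} $p$-variation stays bounded; this, together with the verification that $p$ can be placed in the admissible window, is where the exponent condition $\gamma>\frac1\alpha-1$ enters in an essential way.
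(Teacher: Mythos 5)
Your limit-passage step (extraction of a subsequence converging in $q$-variation, continuity of the Young integral, closedness of the values of $F$) is essentially the paper's, and your remark that non-convexity forces strong convergence of the velocities is on point. But the two a priori bounds that you say ``drive the argument'' do not hold as stated, and they are precisely where the content of the proof lies. First, $\|v^\eta\|_\infty\leq\|F\|_\infty$ does \emph{not} yield $\|z^\eta\|_\alpha\leq C(\|F\|_\infty,\|x\|_\alpha)$: the Young estimate of Corollary \ref{CorAppendix} controls $\big\|\int_0^\cdot v\,dx\big\|_\alpha$ by $\|v\|_{q\text{-var}}\,\|x\|_\alpha$, not by $\|v\|_\infty\|x\|_\alpha$. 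So the H\"older constant of $z^\eta$ depends on the variation of $v^\eta$, while the variation bound you want for $v^\eta$ -- via the selection theorem applied to $t\mapsto F\big(z^\eta_{(t-\eta)^+}\big)$, whose H\"older constant is $\|F\|_\gamma\|z^\eta\|_\alpha^\gamma$ -- depends on the H\"older constant of $z^\eta$. These two estimates are mutually dependent and must be closed by a bootstrap; this is exactly what the paper's descending dyadic induction (Corollary \ref{CorUnifEstimateZ} together with Corollary \ref{CorOscillation} and Proposition \ref{Prop}) achieves, and it only closes on a small interval $[0,T_0]$, whence the local-then-concatenate structure of the paper's argument, of which your scheme has no analogue. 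Iterating your estimates interval by interval makes the constants grow at each of the $T/\eta$ steps, so no bound uniform in $\eta$ comes out.

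Second, the stitching issue you flag in passing is not a technicality but a fatal obstruction to your use of Theorem \ref{ThmSideSelection}. Applying the selection theorem separately on each of the $N=T/\eta$ intervals gives, after rescaling to unit length, per-interval $p$-variation bounds $V_k\lesssim \|F\|_\gamma\|z^\eta\|_\alpha^\gamma\,\eta^{\alpha\gamma}$, and the global $p$-variation of the concatenation can genuinely be of order $N\,\eta^{\alpha\gamma}=T\,\eta^{\alpha\gamma-1}$ -- your own example of a velocity drifting monotonically across a long run of intervals realizes this -- which blows up as $\eta\to0$ since $\alpha\gamma<1$. Nor can you invoke the selection theorem once on all of $[0,T]$: the map $t\mapsto F\big(z^\eta_{(t-\eta)^+}\big)$ is only defined after $z^\eta$, hence $v^\eta$, has been constructed, and replacing $v^\eta$ a posteriori by a better selection destroys the identity $z^\eta=\xi+\int_0^\cdot v^\eta\,dx$. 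The paper circumvents both problems by making a single global selection tied to the dyadic tree -- each new value $v^m_{t^m_i}$ is chosen within $\|F\|_\gamma|z^m_{t^m_i}-z^m_{s(t^m_i)}|^\gamma$ of the value at its dyadic ancestor -- and by estimating the $p$-variation along an arbitrary partition through the level-by-level decomposition $(K_j)$, using that level $j$ contributes at most $2^j$ points with oscillation $O(2^{-j\gamma\beta})$. Without an argument of this kind your uniform $p$-variation bound, and with it the whole compactness step, is unsupported.
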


For a single-valued map, this is a consequence of Young's original
result \cite{Young, Lyons94}.   Unlike the setting of ordinary
differential equations, no uniqueness is to be expected in the
present setting. Our regularity condition on the set-valued map $F$ is the same as in the ordinary differential setting. However, there may be no H\"older, or even continuous, selection of $F$ -- see Proposition 8.2 in \cite{Chistyakov}, so existence results for Young differential inclusions do \textit{not} follow from existence results for Young differential equations. We refer the reader to Chapters 1 and 2 of \cite{AubinCellina} for the basics on differential inclusions. Note that since continuous paths with finite $p$-variation can be reparamatrized into $1/p$-H\"older paths, the result of Theorem \ref{ThmMain} holds for continuous paths $x$  with finite $p$-variation $ 1\leq p<2.$

\subsection{A selection result}
As a by-product of our proof of Theorem~\ref{ThmMain}, we show in Section \ref{SectionSideSelection} that a bounded, nonempty compact-valued, $\gamma$-H\"older continuous set-valued map on the interval $[0,1]$ has a selection with finite $p$-variation, for $p>1/\gamma$.

\begin{thm} 
\label{ThmSideSelection}
Pick $\gamma\in(0,1]$, and let $F: [0,1] \mapsto \mathcal{K}({\mathbb R}^{d})$, be a bounded, $\gamma$-H\"older nonempty compact set-valued map. Then, for any $p > 1/\gamma$ and any $\xi \in F(0)$, there exists a map $f: [0,1] \mapsto {\mathbb R}^d$, of finite $p$-variation, such that $f(t)\in F(t)$, for all $0\leq t\leq 1$, with \textcolor{black}{$f(0)=\xi$,} and which furthermore satisfies the estimate \textcolor{black}{for a constant $C_{\gamma,p}$ depending only on $\gamma,p$}
\begin{align*}
\|f\|_{p-\textrm{\emph{var}}} \leq \textcolor{black}{C_{\gamma,p}}\|F\|_{\gamma}.
\end{align*}
\end{thm}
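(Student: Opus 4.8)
The plan is to build the selection as a uniform limit of selections defined on the dyadic grids, and to control its $p$-variation not by estimating the limit directly but through a telescoping sum whose terms are individually cheap.

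First I would set $D_n=\{k2^{-n}:0\le k\le 2^n\}$ and construct selections $f_n\colon D_n\to\R^d$ by induction on $n$, starting from $f_0(0)=\xi$ and $f_0(1)$ a nearest point of $F(1)$ to $\xi$. Given $f_{n-1}$, I keep its values on $D_{n-1}$ and, for each new point $m=(2k+1)2^{-n}$, I let $f_n(m)$ be a nearest point of the compact set $F(m)$ to the value of $f_{n-1}$ at the left neighbour of $m$ in $D_{n-1}$. The $\gamma$-Hölder continuity of $F$ for the Hausdorff distance $\mathrm{d}_H$, together with $\mathrm{dist}\big(y,F(m)\big)\le\mathrm{d}_H\big(F(m'),F(m)\big)$ for $y\in F(m')$, gives
\begin{equation*}
\big|f_n(m)-f_{n-1}(\text{left neighbour of }m)\big|\le\|F\|_\gamma\,2^{-n\gamma}.
\end{equation*}
In particular the successive selections are uniformly close, so $f:=\lim_n f_n$ exists on the dyadic rationals; since $F$ is compact-valued and continuous, the limit stays in $F(t)$, and a final dyadic limit (which exists since the relevant increments are summable) extends $f$ to a selection on all of $[0,1]$ with $f(0)=\xi$.

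Next I would estimate the $p$-variation through the telescoping $f=f_0+\sum_{n\ge1}g_n$, with $g_n:=f_n-f_{n-1}$. The key feature is that $g_n$ vanishes at every node of the coarser grid $D_{n-1}$ (since $f_n$ agrees with $f_{n-1}$ there) and is supported on the $2^{n-1}$ new points of level $n$, where it is bounded by $\|F\|_\gamma 2^{-n\gamma}$. Thus $g_n$ is a concatenation of $2^{n-1}$ separated "bumps", each rising from and returning to $0$ within one interval of $D_{n-1}$ and of height at most $\|F\|_\gamma 2^{-n\gamma}$; summing over the bumps yields
\begin{equation*}
\|g_n\|_{p-\mathrm{var}}^p\le 2^{n-1}\cdot 2\big(\|F\|_\gamma 2^{-n\gamma}\big)^p,\qquad\text{hence}\qquad \|g_n\|_{p-\mathrm{var}}\le C\,\|F\|_\gamma\,2^{\,n(\frac1p-\gamma)}.
\end{equation*}
By the triangle inequality for the $p$-variation seminorm and $\|f_0\|_{p-\mathrm{var}}\le\|F\|_\gamma$,
\begin{equation*}
\|f\|_{p-\mathrm{var}}\le\|F\|_\gamma+C\|F\|_\gamma\sum_{n\ge1}2^{\,n(\frac1p-\gamma)},
\end{equation*}
and the geometric series converges exactly when $\frac1p-\gamma<0$, i.e. $p>1/\gamma$, producing the constant $C_{\gamma,p}$ and the announced bound $\|f\|_{p-\mathrm{var}}\le C_{\gamma,p}\|F\|_\gamma$.

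The main obstacle, and the reason the telescoping is essential, is that one must \emph{not} try to bound $\|f_n\|_{p-\mathrm{var}}$ directly: the selections are genuinely discontinuous in the limit (no continuous selection need exist), so their individual $p$-variations do not decay. Everything instead hinges on the per-level estimate, where the number of active points of $g_n$ grows like $2^n$ while its amplitude decays like $2^{-n\gamma}$; it is precisely this competition, $2^{n/p}$ against $2^{-n\gamma}$, that forces $p>1/\gamma$ and that makes the final bound depend only on $\|F\|_\gamma$ rather than on the diameter of the values of $F$. The two points requiring care are the fact that $g_n$ returns to zero on the coarse grid (so its bumps are separated and do not accumulate, killing any "lumping" of same-sign jumps), and the rigorous justification of $\|f\|_{p-\mathrm{var}}\le\sum_n\|g_n\|_{p-\mathrm{var}}$, which follows from the uniform convergence of $f_n$ to $f$ together with the lower semicontinuity of $\|\cdot\|_{p-\mathrm{var}}$ under pointwise limits (equivalently, the completeness of the $p$-variation space).
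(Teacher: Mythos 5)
Your proof is correct, and while the underlying construction is the same as the paper's --- at each new dyadic point one picks a value of $F$ at distance at most $\|F\|_\gamma 2^{-n\gamma}$ from the value at the dyadic parent --- you organize the limit passage and the variation estimate along a genuinely different route. The paper builds a separate piecewise-constant approximant $x^m$ for each level $m$ (with no consistency required between levels), proves a uniform oscillation bound over dyadic windows by summing along the chain of ancestors, converts it into a uniform $q$-variation bound by sorting the increments of an arbitrary partition into classes $K_j$ according to the finest dyadic point they straddle (at most $2^j$ increments of size $\lesssim 2^{-j\gamma}$ in class $j$), and finally invokes Helly's selection principle to extract a limit in $p$-variation. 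You instead make the approximants consistent by construction, so the limit exists trivially on dyadics and extends by summability along the ancestor chain, and you replace the greedy-partition bookkeeping and the compactness step by the telescoping $f=f_0+\sum_n g_n$, a per-level bound on $\|g_n\|_{p\text{-var}}$, the triangle inequality, and lower semicontinuity of the $p$-variation seminorm under pointwise limits. The same competition between the $2^n$ active points and the amplitude $2^{-n\gamma}$ drives both arguments, but your packaging localizes it to one dyadic level per term and dispenses with both the oscillation lemma and Helly; that is a real simplification, at the (negligible) cost of having to fix a canonical choice of nearest points to make the levels consistent.

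Two small points. First, the displayed bound $\|g_n\|_{p\text{-var}}^p\le 2^{n-1}\cdot 2\big(\|F\|_\gamma 2^{-n\gamma}\big)^p$ is not quite correct for $p>1$: a partition increment may straddle several bumps (two adjacent bumps with opposite values $c$ and $-c$ produce an increment of size $2|c|$, hence a contribution $2^p|c|^p$), so the right bound is of the form $(1+2^p)\,2^{n-1}\big(\|F\|_\gamma 2^{-n\gamma}\big)^p$, obtained by charging each nonzero increment to the one or two bumps it touches. Since the very next line only uses $\|g_n\|_{p\text{-var}}\le C\|F\|_\gamma 2^{n(1/p-\gamma)}$ with an unspecified $C$, this only affects the value of $C_{\gamma,p}$. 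Second, your closing remark that one \emph{must not} bound $\|f_n\|_{p\text{-var}}$ directly is misleading: a uniform (not decaying) bound on $\|f_n\|_{p\text{-var}}$ is exactly what the paper establishes, and together with lower semicontinuity, or with Helly's principle, it suffices; indeed your own telescoping estimates already yield such a uniform bound for the partial sums.
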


This statement partly answers an open question in Chistyakov and Galkin's work \cite{Chistyakov} -- see Remark 8.1 therein.

\subsection{Rough differential inclusions} 
Stochastic analysis has undergone a real change under the impulse of T. Lyons' theory of rough paths \cite{Lyons98}. It provides a deep understanding of stochastic differential equations and disentangles completely in that setting probabilistic and dynamical matters. As a first step to establishing a full theory of rough differential inclusions, the second aim of the present work is to deal with \textit{rough differential equations perturbed by set-valued drifts}. Fix a finite positive time horizon $T$. We refer here the reader to Appendix \ref{SectionAppendix} for basics on rough paths and rough integrals, in the setting of controlled paths; it suffices here to say that controlled paths are needed to make sense of the rough integral that appears in the next definition.   \vspace{0.1cm}

  For an integer $k\geq 1$ and $\gamma\in [0,1]$, let us
  denote by $C_b^{k,\gamma}$ the space of bounded maps $G$ from $\R^d$ to
  $L(\R^\ell,\R^d)$ which are $k$-times differentiable, with bounded derivatives, and have a
  $\gamma$-H\"older $k$-th derivative. We endow $C_b^{k,\gamma}$ with the norm
  \begin{align*}
    \norm{G}_{C_b^{k,\gamma}}:=\sum_{i=0}^k \normsup{D^iG}+\norm{G}_\gamma\mathbf{1}_{\gamma>0},
  \end{align*}
where $D^iG$ is the $i$-th derivative of $G$. By convention $D^0G:=G$.

\begin{defn*}   
%\label{DefnRDISolution}
Let $T$ be a fixed positive time horizon. Pick $\xi\in\R^d$,  let $\Omega$ be a  subset of
$\R\times\R^d$ containing $[0,T]\times \{\xi\}$.  Let $\bf X$ be a
weak geometric $\alpha$-H\"older rough path, with  $1/3<\alpha\leq
1/2$. Let $F:\Omega\rightarrow\dot 2^{\R^d}$, be a set-valued map, and
{\color{black}$G\in C_b^{1,\gamma}$ with $(2+\gamma)\alpha>1$}. An $\R^d$-valued path $z$ started from $\xi$ is said to be a \emph{solution to the rough differential inclusion}
\begin{align}   \label{EqRDI}
dz_t \in F(t,z_t)dt + G(z_t)d{\bf X}_t,
\end{align}
if $z$ is part of a path $(z,z')$ controlled by $\bf X$, and there
exists an absolutely continuous path $x$ from $[0,T]$ to $\R^d$
starting from zero, such that 
\begin{align*}
\dot{x}_t\in F(t,z_t)
\end{align*}
at Lebesgue almost all times $0\leq t\leq T$, and one has
\begin{align*}
z_t = {\color{black}\xi} + x_t + \int_0^t G(z_s)d{\bf X}_s
\end{align*}
at all times.
\end{defn*}

The regularity assumption on $G$ is the optimal classical regularity assumption required to have a well-posed rough differential equation when $F$ is null. The technics used to prove the existence of a solution to equation \eqref{EqRDI} are different depending on the regularity of the set-valued drift $F$. We first deal with the case where the drift is upper semicontinuous. We need in that case to assume that $F$ takes values in the set of \textit{convex} compact subsets of $\mathbb{R}^d$.

\begin{thm}   \label{ThmRDIUSD}
Let $\bf X$ be a weak geometric $\alpha$-H\"older rough path, with $1/3<\alpha\leq 1/2$, and let $G$ be a $C_b^{2,\gamma}$ one form, with $\alpha(2+\gamma)>1$. Let $F$ be an upper semicontinuous set-valued drift. Assume further that $F$ is locally bounded, and takes it values in the set of nonempty compact convex subsets of $\R^d$. Then there exists a time horizon  $T^*\in (0,T)$ and a solution $z$ to the rough differential inclusion \eqref{EqRDI}, defined on the time interval $[0,T^*]$.
\end{thm}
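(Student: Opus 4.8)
The plan is to recast the problem as a set-valued fixed point problem for the drift $x$ and to solve it with the Kakutani--Fan--Glicksberg theorem, exactly as one does for classical upper semicontinuous inclusions, using rough path theory only to invert the ``smooth'' part of the dynamics. Since $F$ is locally bounded and $[0,T]\times\{\xi\}$ is compact, I first fix $r>0$ small enough that the tube $[0,T]\times\overline{B}(\xi,r)$ is contained in $\Omega$, together with a constant $M$ such that $\|w\|\leq M$ for every $w\in F(t,z)$ with $(t,z)\in[0,T]\times\overline{B}(\xi,r)$. Let $K$ denote the set of absolutely continuous paths $x:[0,T^*]\to\R^d$ with $x_0=0$ and $\|\dot x_t\|\leq M$ for almost every $t$, where the horizon $T^*\in(0,T)$ is to be chosen below. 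Equipped with the uniform topology, $K$ is a convex subset of the Banach space $C([0,T^*],\R^d)$ which is compact by the Arzel\`a--Ascoli theorem, the paths being uniformly bounded and equi-Lipschitz.

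For $x\in K$ the path $x$ is Lipschitz, hence of finite $1$-variation, so the rough differential equation $z_t=\xi+x_t+\int_0^tG(z_s)\,d\mathbf{X}_s$ admits a unique solution $(z,z')=(z[x],G(z[x]))$ controlled by $\mathbf{X}$, by the well-posedness theory recalled in Appendix \ref{SectionAppendix} (this uses $G\in C_b^{2,\gamma}$ and $\alpha(2+\gamma)>1$). A standard a priori estimate bounds $\sup_{t\le T^*}\|z[x]_t-\xi\|$ by the sum of the drift contribution, at most $MT^*$, and the rough integral contribution over $[0,T^*]$, both tending to $0$ as $T^*\to0$ uniformly in $x\in K$; I therefore fix $T^*$ small enough that $z[x]_t\in\overline{B}(\xi,r)$ for all $t\in[0,T^*]$ and all $x\in K$. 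I then define a set-valued map $\Psi$ on $K$ by letting $\Psi(x)$ be the set of $\tilde x\in K$ of the form $\tilde x_t=\int_0^t v_s\,ds$ with $v$ a measurable selection of $t\mapsto F(t,z[x]_t)$. This last map is measurable with nonempty compact values bounded by $M$, so a measurable selection theorem gives $\Psi(x)\neq\emptyset$; since $F$ has convex values, $\Psi(x)$ is convex, and it is contained in $K$ by the choice of $M$ and $T^*$.

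It remains to check that $\Psi$ has closed graph for the uniform topology and to conclude. Suppose $x^n\to x$ and $\tilde x^n\to\tilde x$ uniformly in $K$ with $\tilde x^n\in\Psi(x^n)$, say $\dot{\tilde x}^n_t\in F(t,z[x^n]_t)$ almost everywhere. Continuous dependence of the rough differential equation solution on its Lipschitz drift gives $z[x^n]\to z[x]$ uniformly. The derivatives $\dot{\tilde x}^n$ are bounded by $M$, hence converge along a subsequence weakly in $L^1([0,T^*],\R^d)$ to some $w$; testing against indicator functions identifies $w$ with $\dot{\tilde x}$, so $\tilde x$ is absolutely continuous. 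By the closure (``convergence'') theorem for differential inclusions with upper semicontinuous, convex, compact-valued right-hand side, one obtains $\dot{\tilde x}_t\in F(t,z[x]_t)$ almost everywhere, that is $\tilde x\in\Psi(x)$; the same argument shows that each $\Psi(x)$ is closed. The Kakutani--Fan--Glicksberg theorem then furnishes a fixed point $x\in\Psi(x)$, and $z:=z[x]$ together with $x$ solves \eqref{EqRDI} on $[0,T^*]$.

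The main obstacle I anticipate is the verification of the closed graph property, and specifically the passage to the limit from $\dot{\tilde x}^n_t\in F(t,z[x^n]_t)$ to $\dot{\tilde x}_t\in F(t,z[x]_t)$: this is precisely where the convexity and upper semicontinuity of $F$ are indispensable, through the convergence theorem of differential inclusion theory, and it must be dovetailed with the continuity of the rough-path solution map $x\mapsto z[x]$ and with the a priori confinement of $z[x]$ to the region $\overline{B}(\xi,r)$ where the uniform bound $M$ is valid. Making that solution map continuous in the uniform topology while keeping the controlled-path estimates uniform over $K$, so that Arzel\`a--Ascoli compactness and the weak-$L^1$ limiting argument combine cleanly, is the technical heart of the argument.
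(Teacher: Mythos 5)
Your argument is correct, and it follows the same overall strategy as the paper -- compose the set-valued drift with the rough-path solution map $x\mapsto z[x]$ of $z_t=\xi+x_t+\int_0^tG(z_s)d\mathbf{X}_s$, and exploit upper semicontinuity together with convexity of the values of $F$ -- but it executes the final existence step differently. The paper observes that $\psi(t,x):=z[x]_t$ depends on $x$ only through its past, defines $\mathcal{F}(t,x):=F(t,\psi(t,x))$, and then invokes wholesale the existence theorem for functional differential inclusions with memory (Theorem 1, Section 7, Chapter 4 of Aubin--Cellina) applied to $\dot x_t\in\mathcal{F}(t,x)$; the whole proof is a few lines plus a citation. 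You instead unroll that citation: you build the compact convex set $K$ of $M$-Lipschitz drifts, define the set-valued map $\Psi$ via measurable selections of $t\mapsto F(t,z[x]_t)$, verify nonemptiness, convexity and the closed graph property through the classical Convergence Theorem, and conclude with Kakutani--Fan--Glicksberg. Your route is longer but more self-contained, and it makes explicit two points the paper glosses over: the a priori confinement of $z[x]$ to a tube where the local bound on $F$ applies, and the fact that closedness of the graph must be checked in the uniform topology in which $K$ is compact. On that last point you should note (it is the only place where a line is genuinely missing) that since $K$ is equi-Lipschitz, uniform convergence $x^n\to x$ upgrades by interpolation to convergence in $C^\beta$ for any $\beta<1$, and it is in such a H\"older topology that the continuity of the solution map $x\mapsto z[x]$ (the paper's appeal to Coutin--Lejay, which is also where $G\in C_b^{2,\gamma}$ rather than $C_b^{1,\gamma}$ is used) is available; with that remark inserted, the closed-graph verification and hence the whole fixed point argument goes through.
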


In contrast to the preceding statement, we do not need to assume that $F$ has convex images in the case where it is lower semicontinuous; we assume instead a mild boundedness assumption on $F$.

\begin{thm}   \label{ThmRDI}
Let $\bf X$ be a weak geometric $\alpha$-H\"older rough path, with $1/3<\alpha\leq 1/2$, and let $G$ be a $C_b^{1,\gamma}$ one form, with $\alpha(2+\gamma)>1$. Assume that the set-valued drift $F$ is defined on a closed subset $\Omega$ of $\R\times\R^d$, where it is lower semicontinuous. Assume further that $F$ takes values in the set $\mathcal{K}(\R^d)$ of nonempty compact subsets of $\R^d$, and that there  is a positive constant $L$ such that $[0,T]\times B\big(\xi,LT^\alpha\big)\subset\Omega$, and 
\begin{align*}
\big\|F(t,a)\big\|\leq L,\quad \forall (t,a)\in [0,T]\times B\big(\xi,LT^\alpha\big).
\end{align*}
Then there exists a time horizon  $T^*\in (0,T)$ and a solution $z$ to the rough differential inclusion \eqref{EqRDI} stated at $\xi$, defined on the time interval $[0,T^*]$.
\end{thm}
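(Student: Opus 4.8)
The plan is to turn the set-valued drift $F$ into a single-valued drift that is continuous in the path variable, and then to solve the resulting rough differential equation with this merely continuous drift by a Schauder fixed point argument. The point of this route is that it never uses uniqueness or continuity of the solution map of the rough equation, only existence and compactness of its solution set; this is exactly why, in contrast to Theorem \ref{ThmRDIUSD}, the weaker hypothesis $G\in C_b^{1,\gamma}$ with $\alpha(2+\gamma)>1$ is enough, and why no convexity of the values of $F$ is needed.

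First I would localize in time. Using $\|F(t,a)\|\le L$ on the tube $[0,T]\times B(\xi,LT^\alpha)$ together with the standard rough-integral estimate $\big|\int_0^t G(z_s)\,d\mathbf{X}_s\big|\le C\,t^{\alpha}$ valid for controlled paths $(z,G(z))$ (see Appendix \ref{SectionAppendix}), I would choose $T^*\in(0,T)$ so small that any candidate path $z$ whose drift velocity is bounded by $L$ and whose Gubinelli derivative is $G(z)$ satisfies $|z_t-\xi|\le L\,t + C\,t^{\alpha}\le LT^\alpha$ on $[0,T^*]$; on that interval the composition $F(\cdot,z_\cdot)$ is then well-defined and bounded by $L$. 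Fixing $\alpha'\in(1/3,\alpha)$, I would let $\mathcal{C}$ be the convex set of paths $z$ on $[0,T^*]$, started at $\xi$ and valued in $B(\xi,LT^\alpha)$, such that $(z,G(z))$ is controlled by $\mathbf{X}$ with controlled-path norm bounded by a constant $M$ fixed by the a priori estimates. By interpolation of Hölder norms and Arzel\`a--Ascoli, $\mathcal{C}$ is compact in the weaker topology $C^{\alpha'}$.

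The central step is the selection. For $z\in\mathcal{C}$ I would consider the Nemytskii multimap $N(z):=\{g\in L^1([0,T^*],\R^d):g(t)\in F(t,z_t)\ \text{for a.e.}\ t\}$. Since $t\mapsto(t,z_t)$ is continuous and $F$ is lower semicontinuous, $t\mapsto F(t,z_t)$ is a measurable, compact-valued map, so a measurable selection exists and $N(z)$ is nonempty; it is closed in $L^1$ because the values of $F$ are compact, and it is decomposable by construction. Lower semicontinuity of $F$ in $(t,z)$ transfers, by a standard approximate-selection argument, to lower semicontinuity of $z\mapsto N(z)$ from $\mathcal{C}$ into the closed decomposable subsets of $L^1$. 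I would then invoke the Bressan--Colombo continuous selection theorem for decomposable-valued lower semicontinuous maps to obtain a continuous map $z\mapsto g_z\in L^1$ with $g_z(t)\in F(t,z_t)$ a.e. This is the step that genuinely handles the \emph{nonconvex} lower semicontinuous case, where Michael's selection theorem is unavailable.

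Finally I would set $\Psi(z)_t:=\xi+\int_0^t g_z(s)\,ds+\int_0^t G(z_s)\,d\mathbf{X}_s$, with Gubinelli derivative $G(z)$. The a priori bounds show $\Psi$ maps $\mathcal{C}$ into itself, and $\Psi$ is continuous for the $C^{\alpha'}$ topology: $z\mapsto g_z$ is $L^1$-continuous so the drift term is continuous, while continuity of $z\mapsto\int_0^\cdot G(z_s)\,d\mathbf{X}_s$ on bounded sets of $C^\alpha$-controlled paths is the usual rough-integral estimate, which needs only $G\in C_b^{1,\gamma}$ with $\alpha(2+\gamma)>1$. Since $\Psi(\mathcal{C})$ is bounded in the stronger $C^\alpha$-controlled norm, it is relatively compact in $C^{\alpha'}$, so Schauder's theorem yields a fixed point $z=\Psi(z)$. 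Setting $x_t:=\int_0^t g_z(s)\,ds$, which is absolutely continuous with $\dot x_t=g_z(t)\in F(t,z_t)$ a.e. and $z_t=\xi+x_t+\int_0^t G(z_s)\,d\mathbf{X}_s$, exhibits $z$ as a solution of \eqref{EqRDI} on $[0,T^*]$. I expect the main obstacle to be the second step, namely verifying lower semicontinuity and decomposability of the Nemytskii map $N$ precisely enough to apply the continuous selection theorem; the compactness and continuity bookkeeping in the last step is routine rough-path analysis, and the localization is elementary.
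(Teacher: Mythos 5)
Your overall architecture coincides with the paper's: localize in time, replace $F$ by a continuous single-valued selection operator from paths to drifts, and close with a Schauder fixed point on a compact set of controlled paths, so that neither convexity of the values of $F$ nor more than $C_b^{1,\gamma}$ regularity of $G$ is needed. The genuine difference is the selection mechanism. The paper (Lemma \ref{lem:pertu-select}) takes a single \emph{directionally continuous} selection: it extends $F$ to all of $\R\times\R^d$, fixes $M>L$, and invokes Bressan's theorem (Theorem \ref{thm:bressan}) to get a $\Gamma_M$-continuous function $f$ with $f(t,x)\in F^*(t,x)$; since every $u\in\mathcal{E}^\beta_L$ has H\"older constant $L<M$, the graph of $u$ enters the cone $\Gamma_M$ at each of its points, and a covering argument makes $u\mapsto\xi+\int_0^\cdot f(s,u_s)\,ds$ continuous for the uniform norm by an explicit $\epsilon$--$\delta$ computation. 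You instead apply the Bressan--Colombo continuous selection theorem to the Nemytskii multimap $N(z)\subset L^1$ with closed decomposable values. Both are legitimate ways to beat nonconvexity; yours outsources the work to the decomposability theorem and to the (standard but nontrivial) verification that $z\mapsto N(z)$ is lower semicontinuous, while the paper's route is more self-contained once the directional selection theorem is granted and produces a selection $f$ that does not depend on the path.

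One step as written would fail: your set $\mathcal{C}$ of paths $z$ such that $(z,G(z))$ is controlled by $\mathbf{X}$ with norm at most $M$ is \emph{not} convex, because the remainder $z_{s,t}-G(z_s)X_{s,t}$ is not affine in $z$; a convex combination of two such paths need not admit $G$ of itself as Gubinelli derivative, so Schauder does not apply on $\mathcal{C}$ directly. The paper avoids this by taking as ambient set the ball $B^\beta_L$ of \emph{pairs} $(y,y')$ with $y_0=\xi$, $y'_0=G(\xi)$ and $\|(y,y')\|_{\beta,2\beta}\leq L$ --- a genuinely convex set, compact for the uniform topology --- and letting the map $\Phi$ output the corrected derivative $G(y)$, so that only the fixed point satisfies $z'=G(z)$. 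Your argument goes through after this modification; note also that the paper runs the fixed point with an auxiliary exponent $\beta\in(1/3,\alpha)$ satisfying $\beta(2+\gamma)>1$ (your $\alpha'$) and upgrades the fixed point to an $\alpha$-controlled path a posteriori from the equation it satisfies, which you should do as well to land in the stated solution class.
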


Note that the regularity of $G$ is lower in Theorem~\ref{ThmRDI} than in Theorem~\ref{ThmRDIUSD}. When $G=0$ in Theorem~\ref{ThmRDIUSD} and Theorem~\ref{ThmRDI}, we recover classical conditions for existence of solutions to differential inclusions -- see e.g. the classical reference \cite{AubinCellina}. In Theorem~\ref{ThmRDI}, when $F=\{0\}$, we recover the optimal condition of regularity on $G$ for existence of solutions to a rough differential equation \cite{Davie}.

\medskip

The work has been organized as follows. Section \ref{SectionProof} is dedicated to proving  Theorem \ref{ThmMain} on Young differential inclusions, following the above strategy. Theorem \ref{ThmSideSelection} is proved in Section \ref{SectionSideSelection}, and Section \ref{SectionRDI} is dedicated to the proof of  Theorem \ref{ThmRDI}.

%--------------------------------------------%
\section{Young differential inclusions}
\label{SectionProof}
%--------------------------------------------%

This section is dedicated to proving Theorem \ref{ThmMain}. Given $q\geq 1$ and a finite dimensional vector space $E$, we denote by $V_q\big([0,T], E\big)$ the space of $E$-valued paths with finite $q$-variation on the time interval $[0,T]$. Refer to Appendix \ref{SectionAppendix}.

\ssk

We fix \textcolor{black}{$\alpha\in (\frac{1}{2},1]$} once and for all in this section,
and work in the setting of Theorem \ref{ThmMain}. The pattern of proof
of Theorem \ref{ThmMain} goes as follows. Recall from Appendix
\ref{SectionAppendix} that the space of paths with finite
$p$-variation is endowed with the $p$-variation norm $\|\cdot\|_{\textrm{$p$-var},
  \infty}$, defined in \eqref{EqDefnNormpVar}. Given a positive finite time
horizon $T$, we define the dyadic partitions $\pi^{(m)}=\{{\color{black}t}^m_i\}$ of
the interval $[0,T]$, with ${\color{black}t}^m_i := i2^{-m}T, \,0\leq i\leq
2^m$ and $m\geq 1$. Let $\gamma$ be  given in Theorem \ref{ThmMain} and $p\geq 1$ be such that $\frac{\gamma}{p}+\alpha>1$. We construct in Section \ref{SubsectionConstruction} an approximate solution to the problem on a sufficiently small time interval $[0,T]$, under the form of a pair $(z^m,v^m)$ such that 
\begin{itemize}
   \item $v^m$ has $\|\cdot\|_{\textrm{$p$-var}, \infty}$-norm uniformly bounded in $m$, and is equioscillating,   \vspace{0.1cm}
   
   \item $v^m_t\in F(z^m_t)$ for all dyadic times, and
   \begin{align*}
   z^m_t = \xi+\int_0^t v^m _udx_u, \quad 0\leq t\leq T,
   \end{align*}
with $\xi\in\R^d$.
 \end{itemize}

 It follows then from the first item that the sequence $v^m$ has a
 converging subsequence $v^{m_k}$ with limit some $v$, for the
 $\|\cdot\|_{\textrm{$q$-var}, \infty}$-norm, for any $q>p$ with
 $\frac{\gamma}{q}+\alpha>1$. The continuity statement from Corollary
 \ref{CorAppendix} implies then that $z^{m_k}$ converges in
 $\|\cdot\|_{\alpha, \infty}$-norm to the path $z := \xi+\int_0^\cdot
 v_udx_u$. One gets the fact that $v_t\in F(z_t)$, for all $0\leq
 t\leq T$, from the fact that $F$ is bounded and takes values in
 closed sets. \textcolor{black}{Thus $(z,v)$ is a solution in $C^{\alpha}\big([0,T],{\mathbb R}^d\big)\times V_q\big([0,T], L( {\mathbb R}^\ell,{\mathbb R}^d)\big).$} The existence of a solution to the inclusion defined up to the initial time horizon is a consequence of the fact that the previous existence time does not depend on $\xi$. We now turn to the details.

%%-------------------------------------------------------------%%
\subsection{Construction of the approximate solution}
\label{SubsectionConstruction}
%%-------------------------------------------------------------%%

For $t\in \underset{m\geq 0}{\bigcup} \pi^{(m)}$, set
\begin{equation*}
M(t):=\min\big\{j; t \in \pi^{(j)}\big\}
\end{equation*}
and define the ancestor $s(t)$ of $t$ as 
\begin{align}\label{0.2}
s(t):= \max\Big\{s\in \pi^{(M(t)-1)};  s<t\Big\}.
\end{align}
For each $m\geq 0$, we construct the path $z^m$ on $[0, t_{i+1}^m]$, and $v^m$ on $[0,t_{i+1}^m)$, recursively on $0\leq i\leq 2^m-1$ {\color{black}where $t_{i+1}^m=2^{-m}T(i+1)$}. The construction is \textit{not} inductive on $m$. 

$\bullet$ For $m=0$, choose $v_0^0 \in F({\color{black}\xi})$, and set
\begin{equation*}
v^0_t=v^0_0,\quad z^0_t=\xi+ v^0_0 x_{0,t}, \qquad \forall t\in [0,T].
\end{equation*}

$\bullet$ Pick $m \geq 1$.We set 
\begin{align*}
v_t^m &:= v_0^0,\quad \forall t\in [0,T2^{-m}),\\
z^m_t &:= \xi + v_{0}^0  x_{0,t},\quad \forall t\in [0,T2^{-m}].
\end{align*}
This starts the induction over $0\leq i\leq 2^m-1$. If $z^m : [0,t_i^m]\rightarrow\bbR^d$ and $v^m : [0,t_i^m)\rightarrow\bbR^d$, have been constructed, use the H\"older continuity of $F$ to choose $v^m_{t_i^m} \in F({\color{black}z^m_{t_i^m}})$ such that 
\begin{equation*}
\Big| v^m_{t_i^m}- v^m_{s(t_i^m)}\Big| \leq \|F\|_{\gamma} \Big|z^m_{t_i^m}- z^m_{s(t_i^m)}\Big|^\gamma,
\end{equation*}
and set
\begin{align*}
v^m_t &:= v^m_{t_i^m},\quad \forall t\in [t_i^m, t_i^m + T2^{-m}),\\  z^m_t &:= z^m_{t_i^m} + v^m_{t_i^m}\,x_{t_i^m t},\quad \forall t\in [t_i^m, t_i^m + T2^{-m}].
\end{align*}
If $t_i^m + 2^{-m} T=T$, set
$v^m_T= v^m_{t_i^m}$.

We have $z^m= \xi +\int_0^t v^m_u dx_u$, as a consequence of the fact that $v^m$ is constant along the {\color{black}intervals} of the partition $\pi^{(m)}$ of $[0,T]$. The next section is dedicated to proving a uniform $p$-variation bound on a small time interval satisfied uniformly by the paths $v^m$.

%%--------------------------------------------------------------------------------------------%%
\subsection{Study of the $p$-variation norm of $v^m$ in a small time interval}
%%--------------------------------------------------------------------------------------------%%

This section is dedicated to proving the following intermediate result.

\begin{prop} \label{Prop} 
Pick $\frac{1}{1+\gamma}<\beta <\alpha$, and set $T_0$ as equal to 
\begin{equation}
\begin{split} 
 \label{EqDefnT0}
\min\left\{ 1, \big(2  \|F\|_{\infty} \|x\|_{\alpha}\big)^{- \frac{2}{\alpha-\beta}}, \left(   \frac{ 2\|F\|_{\gamma}\|x\|_{\alpha}}{ 1- 2^{-(\alpha + \beta \gamma -1)}}\right)^{-\frac{2}{\alpha -\beta}}, \left(\frac{\|F\|_{\gamma}}{1-2^{-\gamma\beta}}\right)^{\frac{-4}{\gamma (\alpha -\beta)}}\right\}.
\end{split}
\end{equation}
Pick $p > 1/(\gamma \beta)$. Then we have, for any $S\leq T_0$,
\begin{equation}
\label{EqMainEstimate}
\|v^m \|_{p-\textrm{\emph{var}},[0,S]} \leq \left( \frac{ 1}{ 1- 2^{-\gamma \beta p}}\right)^{1/p} S^{\frac{(\alpha- \beta)\gamma}{4} + \gamma \beta}.
\end{equation}
\end{prop}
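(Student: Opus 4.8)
The plan is to derive a single self-improving inequality for $\Phi(S):=\|v^m\|_{p-\textrm{var},[0,S]}$ by first estimating the ``ancestor increments'' $v^m_t-v^m_{s(t)}$ at every dyadic scale and then summing them along the dyadic tree. Fix $\beta\in(\frac1{1+\gamma},\alpha)$ and $p>1/(\gamma\beta)$ as in the statement. Two features of these choices will be used repeatedly: $\gamma\beta p>1$, which makes the geometric series below converge and produces the factor $(1-2^{-\gamma\beta p})^{-1/p}$; and $\alpha+\gamma\beta>1$ (a consequence of $\beta>\frac1{1+\gamma}$ together with $\beta<\alpha$), which guarantees convergence of the auxiliary series of ratio $2^{-(\alpha+\gamma\beta-1)}$ visible in the third factor of \eqref{EqDefnT0}.

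First I would bound the increment of $z^m$ across an ancestor gap. For a dyadic $t\le S$ with $M:=M(t)$ one has $t-s(t)=2^{-M}T\le t\le S$ and $z^m_t-z^m_{s(t)}=\int_{s(t)}^t v^m_u\,dx_u$; splitting this Young integral into its principal part $v^m_{s(t)}x_{s(t),t}$ and the Young remainder, and using $|v^m_{s(t)}|\le\|F\|_\infty$ together with the Young estimate from Appendix \ref{SectionAppendix}, gives
\begin{equation*}
\big|z^m_t-z^m_{s(t)}\big|\le\Big(\|F\|_\infty+C_{\alpha,p}\,\Phi(S)\Big)\,\|x\|_\alpha\,(t-s(t))^\alpha .
\end{equation*}
Inserting this into the defining inequality of the construction, $|v^m_t-v^m_{s(t)}|\le\|F\|_\gamma|z^m_t-z^m_{s(t)}|^\gamma$, and factoring $(t-s(t))^{\alpha\gamma}=(t-s(t))^{(\alpha-\beta)\gamma}(t-s(t))^{\beta\gamma}\le S^{(\alpha-\beta)\gamma}(2^{-M}T)^{\beta\gamma}$, I obtain a scale-by-scale bound of the form $|v^m_t-v^m_{s(t)}|\le B\,2^{-M\beta\gamma}$, with $B=\|F\|_\gamma\big(\|F\|_\infty+C_{\alpha,p}\Phi(S)\big)^\gamma\|x\|_\alpha^\gamma\,T^{\beta\gamma}\,S^{(\alpha-\beta)\gamma}$. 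The exponent is deliberately lowered from $\alpha$ to $\beta$ so that the surplus factor $S^{(\alpha-\beta)\gamma}$ is set aside to absorb constants at the end.

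Next I would turn these scale-by-scale bounds into a bound on $\Phi(S)$. Decomposing $v^m$ along the dyadic tree as the telescoping sum of its level pieces $P_Mv^m-P_{M-1}v^m$ (each supported on the ``second halves'' of the level-$(M-1)$ intervals and of size $\le b_M:=B\,2^{-M\beta\gamma}$), and noting that on $[0,S]$ only the scales $2^{-M}T\le S$ are present, a summation over generations $M$ of the $p$-th powers of these increments yields
\begin{equation*}
\Phi(S)\le\Big(\tfrac{1}{1-2^{-\gamma\beta p}}\Big)^{1/p}B .
\end{equation*}
The geometric decay in $M$ (ratio $2^{-\beta\gamma}$ in the increments, hence $2^{-\beta\gamma p}$ in their $p$-th powers) is exactly what converges here and what produces the stated prefactor; the smallness of $T_0$ in the two remaining factors of \eqref{EqDefnT0} is then used to dominate the bracketed constants by powers of $S$, and, recalling the surplus $S^{(\alpha-\beta)\gamma}$ carried inside $B$, to convert the raw exponent into $\gamma\beta+\frac{(\alpha-\beta)\gamma}{4}$.

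Finally I would close the loop. The constant $B$ still contains $\Phi(S)$ through $(\|F\|_\infty+C_{\alpha,p}\Phi(S))^\gamma$; since $\gamma<1$ this dependence is sublinear, so for $S$ small the displayed inequality is self-improving and pins $\Phi(S)$ at its ``$\Phi=0$'' order of magnitude. I expect the main obstacle to be twofold. The self-referential structure — $\|v^m\|_{p-\textrm{var}}$ controls the Young remainder for $z^m$, which controls the selection increments of $v^m$, which control $\|v^m\|_{p-\textrm{var}}$ — is what forces working on a short interval and lowering $\alpha$ to $\beta$, and the ``exponent budget'' $(\alpha-\beta)\gamma$ must be apportioned (whence the $\tfrac14$) among the boundedness constant, the remainder constant, and the Hölder constant of $F$; each apportionment is encoded in one factor of \eqref{EqDefnT0}. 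The more delicate technical point is the combinatorial passage from the ancestor-increment bounds to a genuine $p$-variation, i.e. a supremum over \emph{all} partitions rather than the dyadic one: one must verify that the tree decomposition reproduces precisely the geometric factor $(1-2^{-\gamma\beta p})^{-1/p}$, with the condition $p>1/(\gamma\beta)$ entering in an essential way to prevent the generation count from degrading the constant.
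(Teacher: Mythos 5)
Your overall skeleton --- bound the ancestor increments $v^m_t-v^m_{s(t)}$ scale by scale, then convert them into a $p$-variation bound over arbitrary partitions by a dyadic counting argument --- matches the paper's, and your final combinatorial step is essentially the paper's greedy partition into classes $K_j$ with $|K_j|\le 2^j$. Where you genuinely diverge is in how $z^m_{s(t),t}$ is controlled. The paper never brings the $p$-variation of $v^m$ into this estimate: it writes $z^m_{s,t}$ exactly as the dyadic telescope of Lemma \ref{LemRepresentation} and runs a descending induction on the level $n$ (Corollary \ref{CorUnifEstimateZ}), in which the scale-$k$ summands are controlled by the oscillation of $v^m$ at scale $n+k$, already available from the induction hypothesis; the relevant series has ratio $2^{-(\alpha+\beta\gamma-1)}$ and converges because $\alpha+\beta\gamma>1$. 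This makes the argument a finite induction with no circularity, and the four terms of \eqref{EqDefnT0} are exactly what is needed to close it. Your route instead imports the global unknown $\Phi(S)=\|v^m\|_{p-\textrm{var},[0,S]}$ through the black-box Young--Loeve remainder bound and closes with a sublinear bootstrap $\Phi\le a+b\,\Phi^\gamma$; this is a legitimate alternative (a priori $\Phi(S)<\infty$ since $v^m$ is a step function, and $\gamma<1$ makes the inequality self-improving uniformly in $m$), but it is more demanding than what the construction requires.

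The one step that fails as written is the very first one: the Young remainder estimate $\bigl|\int_{s(t)}^t v^m\,dx - v^m_{s(t)}x_{s(t),t}\bigr|\lesssim \|v^m\|_{p-\textrm{var}}\|x\|_\alpha (t-s(t))^\alpha$ is only valid when $\alpha+\tfrac1p>1$, and this is not implied by $p>1/(\gamma\beta)$ (take $\alpha$ near $1/2$ and $p$ large). You would have to first run the whole argument for some $p_0\in\bigl(1/(\gamma\beta),\,1/(1-\alpha)\bigr)$ --- such $p_0$ exists precisely because $\beta>1/(1+\gamma)$ forces $\gamma\beta>1-\alpha$ --- and then invoke $\|v^m\|_{p-\textrm{var}}\le\|v^m\|_{p_0-\textrm{var}}$ for larger $p$, at the price of a worse constant than the one stated. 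This is exactly the constraint the paper's scale-adapted summation avoids, since it only ever uses the decay $2^{-k\beta\gamma}$ of the level-$k$ oscillations rather than a single variation norm. Two lesser mismatches: telescoping the level pieces and applying the triangle inequality for the $p$-variation seminorm yields a constant of the form $(1-2^{-(\gamma\beta-1/p)})^{-1}$ rather than $(1-2^{-\gamma\beta p})^{-1/p}$ (to get the latter one must sum $p$-th powers within the $K_j$ classes before extracting the root, and for that one needs the chained oscillation bound of Corollary \ref{CorOscillation}, which is where the fourth term of \eqref{EqDefnT0} is consumed); and your bootstrap closes only under a smallness condition on $S$ involving $\Phi^\gamma$ that is not among the four terms of \eqref{EqDefnT0}, so strictly speaking you prove a variant of Proposition \ref{Prop} with a different threshold $T_0$. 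None of this is fatal for the downstream use (only $m$-uniformity and $\xi$-independence of the threshold matter), but the proof as proposed does not establish the statement with the quantities as given.
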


The proof of Proposition \ref{Prop} proceeds in several steps. We first give a discrete integral representation of $z^m$ that proves useful.

\begin{lem}
\label{LemRepresentation}
Pick $0 \leq n \leq m$, and two consecutive points $s,t$ in $\pi^{(n)}$. Then, setting $s_i^k = s+ i2^{-n-k}T$, we have
\begin{align*}
z^m_{s,t}= v_s^m x_{s,t} + \sum_{k=0}^{m-n-1} \sum_{i=0}^{2^k -1} v_{s_i^{k},s_{2i+1}^{k+1}}^mx_{s_{2i+1}^{k+1},s_{i+1}^{k}}.
\end{align*}
\end{lem}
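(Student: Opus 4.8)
The plan is to establish the identity by a successive-refinement (telescoping) argument at scales between $\pi^{(n)}$ and $\pi^{(m)}$. For $0\le k\le m-n$, I would introduce the Riemann-type sum
$$
A_k := \sum_{i=0}^{2^k-1} v^m_{s_i^k}\, x_{s_i^k, s_{i+1}^k},
$$
built from the left-endpoint values of $v^m$ over the subdivision of $[s,t]$ by the points it contains from $\pi^{(n+k)}$: indeed $s_i^k = s+i2^{-n-k}T$ enumerates exactly those points, with $s_0^k=s$ and $s_{2^k}^k=t$.

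First I would identify the two ends of the telescope. For $k=0$ the subdivision is trivial, so $A_0 = v^m_s\, x_{s,t}$, which is precisely the first term on the right-hand side. For $k=m-n$ the points $s_i^{m-n}=s+i2^{-m}T$ are consecutive points of $\pi^{(m)}$; since $v^m$ is constant, equal to its left value, on each interval of $\pi^{(m)}$, the construction of $z^m$ gives $z^m_{s_i^{m-n},\,s_{i+1}^{m-n}} = v^m_{s_i^{m-n}}\, x_{s_i^{m-n},\,s_{i+1}^{m-n}}$, and summing over $i$ telescopes to $A_{m-n}=z^m_{s,t}$.

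The heart of the computation is the one-step increment $A_{k+1}-A_k$. Using $s_{2i}^{k+1}=s_i^k$ and $s_{2i+2}^{k+1}=s_{i+1}^k$, together with the additivity of increments $x_{s_i^k, s_{i+1}^k} = x_{s_i^k, s_{2i+1}^{k+1}} + x_{s_{2i+1}^{k+1}, s_{i+1}^k}$, the level-$(k+1)$ sum pairs against the level-$k$ sum so that the $v^m_{s_i^k}\,x_{s_i^k, s_{2i+1}^{k+1}}$ contributions cancel, leaving
$$
A_{k+1}-A_k = \sum_{i=0}^{2^k-1}\big(v^m_{s_{2i+1}^{k+1}} - v^m_{s_i^k}\big)\, x_{s_{2i+1}^{k+1}, s_{i+1}^k} = \sum_{i=0}^{2^k-1} v^m_{s_i^k,\, s_{2i+1}^{k+1}}\, x_{s_{2i+1}^{k+1}, s_{i+1}^k}.
$$
Summing the telescoping identity $A_{m-n}=A_0+\sum_{k=0}^{m-n-1}(A_{k+1}-A_k)$ then yields exactly the claimed representation.

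The argument carries no analytic content; the only delicate part will be the index bookkeeping—verifying that $s_i^k$ genuinely enumerates the level-$(n+k)$ points of $[s,t]$, that $s_{2i+1}^{k+1}$ is the midpoint of $[s_i^k,s_{i+1}^k]$, and that the two boundary sums $A_0$ and $A_{m-n}$ are correctly identified. The latter is the one substantive point, as it relies on the defining feature of the construction that $v^m$ is piecewise constant along $\pi^{(m)}$, so that at that finest scale $z^m$ is literally the left-point Young sum of $v^m$ against $x$.
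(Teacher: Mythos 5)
Your proof is correct and follows essentially the same route as the paper: your $A_k$ is exactly the paper's $\mu^{m,(k)}_{s,t}$ written out explicitly, your increment computation $A_{k+1}-A_k$ is the paper's identity \eqref{13-1}, and your identification $A_{m-n}=z^m_{s,t}$ replaces the paper's descending induction on $n$ by the equivalent direct observation that $z^m$ is the left-point sum of $v^m$ against $x$ at the finest scale. The telescoping over dyadic levels is the same in both.
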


\begin{proof}
For $0\leq s\leq t\leq T$, set
\begin{equation*}
\mu^m_{s,t} := \mu^{m,(0)}_{s,t}= {\color{black} v^m_s} x_{s,t}
\end{equation*}
and, for $k \geq 1$,
\begin{equation*}
\mu^{m,(k)}_{s,t} :=\mu^{m,(k-1)}_{s,\frac{t+s}{2}}+\mu^{m,(k-1)}_{\frac{t+s}{2},t}.
\end{equation*}
$\bullet$ We first prove by descending induction on $n$ that for $0 \leq n \leq m$, and two consecutive points $s,t$ in $\pi^{(n)}$, we have
\begin{align}\label{EqIdentity}
z^m_{s,t} = \mu^{m,(m-n)}_{s,t} = \mu^{m,(k)}_{s,t}, \qquad \forall k \geq m-n.
\end{align}
This identity holds true when $n=m$ as a consequence of the definition of the objects. Assume that \eqref{EqIdentity} holds true for $n\leq m$, and let $s,t$ be two consecutive points in $ \pi^{(n-1)}$. Since \textcolor{black}{$(s,\frac{s+t}{2})$ and $(\frac{s+t}{2},t)$ are two consecutive pairs }of points in $\pi^{(n)}$, we have, from the definition of $\mu^{m,(k)}$ and induction,
\begin{align*}
z^{m}_{s,t} = z^m_{s,\frac{s+t}{2}}+ z^m_{\frac{s+t}{2},t} =\mu^{m,(m-n+1)}_{s,t}.
\end{align*}
For $k \geq {\color{black}(m-n)}$, we have
\begin{align*}
\mu^{m,(k)}_{s,t}=\mu^{m,(k-1)}_{s,\frac{s+t}{2}}+ \mu^{m,(k-1)}_{\frac{s+t}{2},t} =\mu^{m,(m-n)}_{s,\frac{s+t}{2}}+ \mu^{m,(m-n)}_{\frac{s+t}{2},t} =\mu^{m,(m-n+1)}_{s,t};
\end{align*}
this closes the inductive proof of identity \ref{EqIdentity}.

\smallskip

$\bullet$ One then sees by induction on $k$ that setting as in the statement of the Lemma
\begin{align*}
s_i^k = s+ i 2^{-k}(t-s) =  s+ i2^{-n-k}T,
\end{align*}
one has
\begin{align*}
\mu^{m,(k)}_{s,t} = \sum_{i=0}^{2^k-1}\mu_{s_i^k, s_{i+1}^k}^m
\end{align*}
and
\begin{align}\label{13-1}
\mu^{m,(k+1)}_{s,t} - \mu^{m,(k)}_{s,t}=\sum_{i=0}^{2^k -1} v_{s_i^{k},s_{2i+1}^{k+1}}^m \, x_{s_{2i+1}^{k+1},s_{i+1}^{k}}.
\end{align}
Summing equation \eqref{13-1} for $k$ from $0$ to $m-n$, and using identity \eqref{EqIdentity} gives the identity of the Lemma. 
\end{proof}

\begin{cor}
\label{CorUnifEstimateZ}
Pick $\frac{1}{1+\gamma}<\beta<\alpha$, and set
\begin{align*}
T_1 := \min\left\{ 1, \left(2  \|F\|_{\infty} \|x\|_{\alpha}\right)^{- \frac{2}{\alpha-\beta}}, \left(\frac{ 2\|F\|_{\gamma}\|x\|_{\alpha}}{ 1- 2^{-(\alpha + \beta \gamma -1)}}\right)^{-\frac{2}{\alpha -\beta}}\right\}.
\end{align*}
Then we have for any $0\leq S\leq T_1$ and any $0\leq n\leq m$, the $m$-uniform bound  
\begin{equation}
\label{EqUniformEstimate}
\sup_{[s,t] \in \pi^{(n)}} |z_{s,t}^m| \leq S^{\frac{\alpha-\beta}{2}} \, (S2^{-n})^{\beta};
\end{equation}
the supremum is over consecutive points $s,t$ of $\pi^{(n)}$.
\end{cor}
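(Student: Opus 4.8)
The plan is to prove the stronger, $m$-uniform statement by \emph{descending} induction on $n$, running from $n=m$ down to $n=0$, using the discrete representation of $z^m_{s,t}$ furnished by Lemma~\ref{LemRepresentation}. Write $\Delta_n := \sup|z^m_{s,t}|$, the supremum being over pairs of consecutive points $s,t$ of $\pi^{(n)}$ inside $[0,S]$; the goal is to show $\Delta_n \leq S^{\frac{\alpha-\beta}{2}}(S2^{-n})^\beta$ for every $0\leq n\leq m$, a bound that does not involve $m$.

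The crucial structural observation is that each velocity increment appearing in Lemma~\ref{LemRepresentation} is a jump of $v^m$ between a dyadic point and its \emph{ancestor}. Indeed, with $s_i^k = s+i2^{-n-k}T$, the point $s_{2i+1}^{k+1}$ is the midpoint of $[s_i^k,s_{i+1}^k]$, hence first appears at level $n+k+1$ and has ancestor $s(s_{2i+1}^{k+1})=s_i^k$; moreover $s_i^k$ and $s_{2i+1}^{k+1}$ are consecutive points of $\pi^{(n+k+1)}$. The defining property of the construction in Section~\ref{SubsectionConstruction} therefore gives
\begin{align*}
\big|v^m_{s_i^k,\,s_{2i+1}^{k+1}}\big| \leq \|F\|_\gamma\,\big|z^m_{s_i^k,\,s_{2i+1}^{k+1}}\big|^\gamma \leq \|F\|_\gamma\,\Delta_{n+k+1}^\gamma.
\end{align*}
Bounding $|v^m_s|\leq\|F\|_\infty$ and the $x$-increments by $\|x\|_\alpha$ times the appropriate power of the mesh, and counting $2^k$ terms in the inner sum, Lemma~\ref{LemRepresentation} yields the recursion
\begin{align*}
\Delta_n \leq \|F\|_\infty\|x\|_\alpha\,(2^{-n}S)^\alpha + \|F\|_\gamma\|x\|_\alpha\sum_{k=0}^{m-n-1} 2^k\,(2^{-n-k-1}S)^\alpha\,\Delta_{n+k+1}^\gamma.
\end{align*}

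I would then close the induction by showing each of the two terms on the right is at most $\tfrac12 S^{\frac{\alpha-\beta}{2}}(S2^{-n})^\beta$. The first term is controlled using $S\leq(2\|F\|_\infty\|x\|_\alpha)^{-2/(\alpha-\beta)}$, the second factor in the definition of $T_1$. For the second term one inserts the inductive bounds on the $\Delta_{n+k+1}$; after collecting powers of $S$ and of $2$, the sum over $k$ becomes a geometric series of ratio $2^{-(\alpha+\beta\gamma-1)}$. Here the hypothesis $\beta>\frac{1}{1+\gamma}$ enters decisively: combined with $\alpha>\beta$ it forces $\alpha+\beta\gamma>1$, so the series converges and is bounded by $(1-2^{-(\alpha+\beta\gamma-1)})^{-1}$, exactly the constant appearing in the third term of $T_1$. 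Comparing exponents one checks the residual power of $S$ is nonnegative (after extracting $S^{\frac{\alpha-\beta}{2}}$ to match $T_1$, the leftover exponent is $\gamma\frac{\alpha+\beta}{2}\geq 0$) and that the powers of $2$ work out since $\alpha+\beta\gamma>\beta$, so that $S\leq T_1\leq 1$ makes this term $\leq\tfrac12 S^{\frac{\alpha-\beta}{2}}(S2^{-n})^\beta$ as well. The base case $n=m$ is immediate, the sum being empty. The main obstacle is the bookkeeping in this last step — correctly identifying the ancestor relation so that the $v^m$-jumps feed back into the $\Delta$'s one level finer, and then matching the three exponent/constant constraints against the three terms defining $T_1$; everything else is routine estimation.
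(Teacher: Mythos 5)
Your proposal is correct and follows essentially the same route as the paper: a descending induction on $n$ based on the representation of Lemma~\ref{LemRepresentation}, using the ancestor relation $s(s_{2i+1}^{k+1})=s_i^k$ to convert each velocity jump into $\|F\|_\gamma$ times a $z$-increment at one level finer, and then matching the resulting geometric series (convergent because $\beta>\frac{1}{1+\gamma}$ and $\alpha>\beta$ give $\alpha+\beta\gamma>1$) against the three constraints defining $T_1$. The only difference is a harmless reindexing — you bound the level-$n$ increments directly in terms of levels $n+1,\dots,m$, whereas the paper steps from level $n$ to $n-1$ — and your exponent bookkeeping (leftover power $\gamma\frac{\alpha+\beta}{2}\geq 0$ after extracting $S^{\frac{\alpha-\beta}{2}}$) is accurate.
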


\begin{proof}
The proof is again by descending induction on $n\in\{ 0,\dots,m\}$. We first have for two consecutive points $s,t$ of $\pi^{(m)}$ the estimate 
\begin{align*}
|z^m_{s,t}| \leq \|F\|_{\infty} \|x\|_{\alpha} (S2^{-m})^{\alpha},
\end{align*}
so \eqref{EqUniformEstimate} holds true for $m=n$ since $0\leq S\leq T_1$. Assume now that \eqref{EqUniformEstimate} has been proved for $n\leq m$, an let $s,t$ be two consecutive points of $\pi^{(n-1)}$. We use the representation formula 
\begin{align*}
z^m_{s,t}= v_s^m  x_{s,t} + \sum_{k=0}^{m-n+1} \sum_{i=0}^{2^k -1}
v^m_{s_i^{k},s_{2i+1}^{k+1}}x_{s_{2i+1}^{k+1},s_{i+1}^{k}}
\end{align*}
from Lemma \ref{LemRepresentation}, with $s_i^k =s + i2^{-n-k+1}S$. Note that $M(s_{2i+1}^{k+1})=n+k$, here, and the ancestor $s(s_{2i+1}^{k+1})$ of $s_{2i+1}^{k+1}$ is $s_i^k$. Then, using \eqref{EqUniformEstimate} for $n+k$, we have
\begin{align*}
\Big|v_{s_i^{k},s_{2i+1}^{k+1}}^m\Big| \leq \|F\|_{\gamma}\Big|z^m _{s_i^{k},s_{2i+1}^{k+1}}\Big|^{\gamma}\leq \|F\|_{\gamma} S^{\frac{(\alpha-\beta)}{2} \gamma} \big(S2^{-n-k}\big)^{\beta\gamma},
\end{align*}
so we obtain for $| z^m_{st} |$ the upper bounds
\begin{align*}
&\|F\|_{\infty}\|x\|_{\alpha} \big(2^{-n+1}S\big)^{\alpha} + \sum_{k=0}^{m-n+1}  2^{k}S^{\frac{(\alpha-\beta)}{2}  \gamma}\big(S2^{-n-k}\big)^{\beta\gamma}\|x\|_{\alpha}\big(S2^{-n-k}\big)^{\alpha}   \\
&\leq \|F\|_{\infty}\|x\|_{\alpha} \big(2^{-n+1}S\big)^{\alpha} + \|x\|_{\alpha} \|F\|_{\gamma} S^{ \alpha + \gamma\beta+\frac{(\alpha-\beta)}{2}  \gamma }\,\frac{2^{-n( \alpha + \beta \gamma )}}{ 1- 2^{-(\alpha + \beta \gamma -1)}}.
\end{align*}
The choice of $S\leq T_1$ ensures that 
\begin{align*}
\|F\|_{\infty}\|x\|_{\alpha} (2^{-n+1}S)^{\alpha} \leq \frac{1}{2} S^{\frac{\alpha-\beta}{2}} ( 2^{-(n-1)} S)^{\beta}
\end{align*}
and since $\alpha + \gamma \beta >1 >\alpha$ and $S<1$
\begin{align*}
\|x\|_{\alpha} \|F\|_{\gamma} S^{ \alpha + \gamma\beta+ \frac{(\alpha-\beta)}{2}  \gamma }\,\frac{2^{-n( \alpha + \beta \gamma )}}{ 1- 2^{-(\alpha + \beta \gamma -1)}} \leq \frac{1}{2} S^{\frac{\alpha- \beta}{2}} \big( 2^{-(n-1)} S\big)^{\beta};
\end{align*}
this closes the descending induction step.
\end{proof}

Recall the oscillation ${\sf Osc}(v,I)$ of a function $v : I\mapsto\bbR^d$, is defined by the formula
\begin{align*}
{\sf Osc}(v,I) := \sup_{a,b\in I}\big(v(b)-v(a)\big).
\end{align*}
The uniform control of the oscillation of the $v^m$ provided by the next statement is necessary to use the compactness result on the set of bounded functions equipped with uniform norm stated in Theorem 5, Section 4, Chapter 0 of Aubin and Cellina's book \cite{AubinCellina}. Recall the definition of $T_0\leq T_1$ from the statement of Proposition \ref{Prop}. The notation $[s,t] \in \pi^{(n)}$ used below stands for two consecutive points $s,t$ in $\pi^{(n)}$.

\begin{cor}
\label{CorOscillation}
For any $0\leq S\leq T_0$, we have, for any $0\leq n\leq m$,
\begin{equation}
\label{EqControlOscillation}
\sup_{[s,t] \in \pi^{(n)}} {\sf Osc}\big(v^m,[s,t)\big) \leq S^{\frac{(\alpha-\beta)\gamma}{4}}\big(S2^{-n}\big)^{\gamma \beta}.
\end{equation}
\end{cor}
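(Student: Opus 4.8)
```latex
The plan is to prove the oscillation bound \eqref{EqControlOscillation} by descending induction on $n$, mirroring the structure of Corollary \ref{CorUnifEstimateZ}, but now controlling the increments of $v^m$ rather than those of $z^m$. The key observation is that the oscillation of $v^m$ on a half-open interval $[s,t)$ between consecutive points of $\pi^{(n)}$ is built up from the jumps of $v^m$ that occur at the finer dyadic times inside $[s,t)$. At each such interior dyadic time $u$, the construction of $v^m$ in Section \ref{SubsectionConstruction} guarantees the controlled-jump estimate $|v^m_u - v^m_{s(u)}| \leq \|F\|_\gamma |z^m_{s(u),u}|^\gamma$, where $s(u)$ is the ancestor of $u$. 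Hence every increment of $v^m$ can be estimated by a power of an increment of $z^m$, and the latter is controlled uniformly in $m$ by the bound \eqref{EqUniformEstimate} of Corollary \ref{CorUnifEstimateZ}.

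The base case $n=m$ is immediate: on a half-open interval $[s,t)$ with $s,t$ consecutive in $\pi^{(m)}$, the path $v^m$ is constant, so its oscillation is zero and the inequality holds trivially. For the inductive step I would take consecutive points $s,t$ in $\pi^{(n-1)}$, write their midpoint $\frac{s+t}{2}$, and split the oscillation on $[s,t)$ into contributions from the two halves $[s,\frac{s+t}{2})$ and $[\frac{s+t}{2},t)$, each governed by the inductive hypothesis at level $n$, together with the single jump of $v^m$ at the midpoint $\frac{s+t}{2}$. This midpoint jump is exactly of the controlled type: its ancestor is $s$, and by \eqref{EqUniformEstimate} applied at level $n-1$ we have $|z^m_{s,\frac{s+t}{2}}| \leq S^{\frac{\alpha-\beta}{2}}(S2^{-(n-1)})^\beta$, so
\begin{align*}
\Big|v^m_{\frac{s+t}{2}} - v^m_s\Big| \leq \|F\|_\gamma \Big|z^m_{s,\frac{s+t}{2}}\Big|^\gamma \leq \|F\|_\gamma\, S^{\frac{(\alpha-\beta)\gamma}{2}}\big(S2^{-(n-1)}\big)^{\beta\gamma}.
\end{align*}
Adding this jump term to the two half-interval oscillations bounded by the inductive hypothesis gives an upper estimate of the form $2\cdot S^{\frac{(\alpha-\beta)\gamma}{4}}(S2^{-n})^{\gamma\beta} + \|F\|_\gamma S^{\frac{(\alpha-\beta)\gamma}{2}}(S2^{-(n-1)})^{\gamma\beta}$ for the oscillation at level $n-1$, and the goal is to dominate this by the target $S^{\frac{(\alpha-\beta)\gamma}{4}}(S2^{-(n-1)})^{\gamma\beta}$.

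The arithmetic then hinges on two points. First, the factor $2$ multiplying the inductive half-interval term must be absorbed into the change of dyadic scale: passing from level $n$ to level $n-1$ multiplies $(S2^{-n})^{\gamma\beta}$ by $2^{\gamma\beta}$, so the combined half-interval contribution sits below $2^{1-\gamma\beta}$ times the target unless one is careful; this is precisely where the smallness of $S$ and the third/fourth terms in the definition \eqref{EqDefnT0} of $T_0$ enter. Indeed, the fourth constraint $S \leq \big(\|F\|_\gamma/(1-2^{-\gamma\beta})\big)^{-4/(\gamma(\alpha-\beta))}$ is tailored to force $\|F\|_\gamma\, S^{\frac{(\alpha-\beta)\gamma}{4}}$ to be small enough that the midpoint-jump term is dominated by the remaining slack $S^{\frac{(\alpha-\beta)\gamma}{4}}(S2^{-(n-1)})^{\gamma\beta}\big(1 - 2^{1-\gamma\beta}\cdot(\text{something})\big)$. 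I expect the main obstacle to be bookkeeping the exponents of $S$ and of the dyadic factor $2^{-n}$ so that the slack in the power $S^{\frac{(\alpha-\beta)\gamma}{4}}$ (which is strictly smaller than the full $S^{\frac{(\alpha-\beta)\gamma}{2}}$ appearing in the jump bound) exactly compensates the geometric growth across scales; getting the constant $T_0$ to close the induction is the delicate part, and it is for this reason that the definition of $T_0$ in \eqref{EqDefnT0} carries the specific exponents $\frac{-4}{\gamma(\alpha-\beta)}$ and the factor $(1-2^{-\gamma\beta})^{-1}$.
```
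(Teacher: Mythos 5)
Your reduction of increments of $v^m$ to controlled jumps $|v^m_u-v^m_{s(u)}|\leq \|F\|_\gamma|z^m_{s(u),u}|^\gamma$, estimated through Corollary \ref{CorUnifEstimateZ}, is the right ingredient, and you correctly identify the fourth term of \eqref{EqDefnT0} as the one that absorbs $\|F\|_\gamma S^{(\alpha-\beta)\gamma/4}(1-2^{-\gamma\beta})^{-1}$. But the recursive bisection scheme cannot close. Write $A_n:=S^{\frac{(\alpha-\beta)\gamma}{4}}(S2^{-n})^{\gamma\beta}$ for the target at level $n$. Your inductive step yields
\begin{align*}
{\sf Osc}\big(v^m,[s,t)\big)\;\leq\; 2A_n+J_n,\qquad J_n\lesssim \|F\|_\gamma S^{\frac{(\alpha-\beta)\gamma}{2}}\big(S2^{-n}\big)^{\gamma\beta},
\end{align*}
for $s,t$ consecutive in $\pi^{(n-1)}$, and you need the right-hand side to be at most $A_{n-1}=2^{\gamma\beta}A_n$. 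Since $\gamma\beta<1$ one has $2^{\gamma\beta}<2$, so already $2A_n>A_{n-1}$ \emph{before the jump term is added}: there is no slack for the smallness of $S$ to exploit, because $S$ only multiplies $J_n$, not the doubled inductive term. Unrolling the recursion from the base case $n=m$ shows your scheme actually produces a bound of order $\sum_{k}2^{k(1-\gamma\beta)}$, which diverges geometrically in $m-n$; no choice of $T_0$ repairs this.

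The paper's proof avoids the doubling entirely. To control $|v^m_u-v^m_s|$ for a fixed $u\in[s,t)$, it builds a single greedy chain $s=s_0,s_1,\dots,s_{m-n}$ with $s_{i+1}=s_i+2^{-(n+i+1)}S$ whenever this does not overshoot, and $s_{i+1}=s_i$ otherwise, so that each $s_{i+1}$ either equals $s_i$ or has $s_i$ as its ancestor. Along this chain there is exactly \emph{one} controlled jump per dyadic scale, giving the convergent geometric series
\begin{align*}
\sum_{i=0}^{m-n}\|F\|_\gamma S^{\frac{(\alpha-\beta)\gamma}{2}}\big(S2^{-n-i}\big)^{\beta\gamma}\;\leq\;\frac{\|F\|_\gamma S^{\frac{(\alpha-\beta)\gamma}{2}}}{1-2^{-\gamma\beta}}\big(S2^{-n}\big)^{\beta\gamma},
\end{align*}
and the fourth constraint in $T_0$ turns $\|F\|_\gamma S^{\frac{(\alpha-\beta)\gamma}{4}}(1-2^{-\gamma\beta})^{-1}\leq 1$ into the stated bound. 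The structural point you are missing is that the oscillation must be controlled pointwise from the left endpoint $s$ via one chain per target point, not recursively over a binary tree of subintervals whose leaf count doubles at each scale while the target only gains a factor $2^{\gamma\beta}<2$.
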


\begin{proof}
Note that if $n\geq m$ and  $s$ and $t$ are two consecutive points of $\pi^{(n)}$, the function $v^m$ has null oscillation on the interval
$[s,t)$, since it is constant on the intervals of the partition $\pi^{(m)}$. Let then take $n\leq (m-1)$. Set $s_0=s$ and define a finite sequence $(s_i)_{i=0,...,m-n}$ setting $s_{i+1}=s_i+2^{-(n+i+1)}S$, if $s_i+2^{-(n+i+1)}S\leq t$, and $s_{i+1}=s_i$, otherwise. Then $s_i\in\pi^{(n+i)}$, for any $i$, and either $s_{i+1}=s_i$ or its ancestor $s(s_{i+1})$ is $s_i$. We then have from the uniform estimate \eqref{EqUniformEstimate} on $z^m_{s,t}$ the bound
\begin{equation*}
\big|v^m_{s_i,s_{i+1}} \big| \leq \|F\|_{\gamma} \, \big| z^m_{s_i,s_{i+1}}\big|^{\gamma} \leq \|F\|_{\gamma} \, \left(S^{\frac{{\color{black}\alpha-\beta}}{2}} (S2^{-n-i})^{\beta}\right)^{\gamma}.
\end{equation*}
We obtain \eqref{EqControlOscillation} summing these inequalities for $i$ from $0$ to $(m-n)$, and from the definition of $T_0$.
\end{proof}

\begin{proof}[Proof of Proposition \ref{Prop}]
Take $0\leq S\leq T_0$, and let $\pi = (s_i)_{i=0}^N$ be a partition of the interval $[0,S]$.

The following partition is a kind of greedy partition of the set $\{0,...,N-1\}$, in terms of the size of corresponding increments in the above formula. Let then set
\begin{align*}
 I_0 &:= \Big\{\ell\in \{0,..,{\color{black}N-1}\}; \exists j \in \{0, ...,2^m-1\} , (s_\ell, s_{\ell+1}] \subset [t_j^m, t_{j+1}^m)\Big\},\\
I_1 &:= \Big\{\ell\in \{0,..,{\color{black}N-1}\}; 2^{-1} \in (s_\ell, s_{\ell+1}]\Big\},
\end{align*}
and, for $2\leq j\leq m$, set  
\begin{align*}
I_j := \Big\{ \ell\in \{0,..,{\color{black}N-1}\}; \exists t\in \pi^{(j)}\setminus \pi^{(j-1)}\cap (s_{\ell},s_{\ell+1}]\Big\}.
\end{align*}
We define a partition of $\{ 0,\dots,N-1\}$ setting $K_0:=I_0$ and 
\begin{align*}
K_j := I_j \setminus \bigcup_{k=0}^{j-1} I_k,
\end{align*}
for $2\leq j\leq m$. Note that $K_j$ has at most $2^j$ elements.   \vspace{0.1cm}

\begin{itemize}
   \item For $\ell\in K_1$, we know from Corollary \ref{CorOscillation} on oscillations of $v^m$, with ${\color{black}n}=0$, that
\begin{align*}
\big|v^m_{s_{l+1}}-v^m_{s_l}\big| \leq S^{\frac{(\alpha- \beta)\gamma}{4}+\gamma \beta }.
\end{align*}
   
	\item \textcolor{black}{ For $\ell\in K_0$, one has $v^m_{s_j,s_{j+1}}=0$, since $v^m$ is constant over the intervall of $\pi^{(m)}.$}

   \item For $\ell\in K_j$, there exists $t \in \pi^{(j)} \setminus \pi^{(j-1)}$ such that {\color{black} $s_{\ell} <t \leq s_{\ell+1}$, and for all $u \in \pi^{(j-1)}$, one has either $u\leq s_{\ell}$, or $u >s_{\ell+1}$.}
\end{itemize}

Then  $[s_\ell,s_{\ell+1}) \subset [t-S2^{-j},t+S{\color{black}2^{-j}})$, and using Corollary \ref{CorOscillation} with $j-1$, one gets
\begin{align*}
\big|v^m_{s_{\ell+1}}-v^m_{s_\ell}\big| \leq S^{\frac{(\alpha- \beta)\gamma}{4} }\big(S2^{-(j-1)}\big)^{\gamma \beta}.
\end{align*}
Taking $p>{\color{black}1/(\gamma\beta)}$, one then has
\begin{align*}
\sum_{i=0}^{N-1} \big|v^m_{s_{i+1}} -v^m_{s_i}\big|^p \leq 2 S^{\frac{(\alpha- \beta)\gamma p}{4} }\sum_{j=0}^{m} 2^j \big(S2^{-(j-1)}\big)^{\gamma \beta p} \leq 2 S^{\frac{(\alpha- \beta)\gamma p}{4} } S^{\gamma \beta p} \frac{ 1}{ 1- 2^{-\gamma \beta p}},
\end{align*}
from which \eqref{EqMainEstimate} follows
\begin{align*}
\|v^m\|_{p-\textrm{var},[0,S]} \leq \left( \frac{ 1}{ 1- 2^{-\gamma \beta p}}\right)^{1/p} S^{\frac{(\alpha- \beta)\gamma}{4} + \gamma \beta}.
\end{align*}
\end{proof}

%%----------------------------------------------------------%%
\subsection{Local and global existence for solutions}
\label{SectionLocalGlobal}
%%----------------------------------------------------------%%

We finally give the proof of Theorem \ref{ThmMain} in this section. We first prove the existence of a solution to the differential inclusion \eqref{EqYDI} on the time interval $[0,T_0]$, for the time $T_0$ defined in \eqref{EqDefnT0} in Proposition \ref{Prop}. Since the definition of $T_0$ does not involve the initial condition $\xi$ of the dynamics, we obtain by concatenation a solution to the inclusion defined over the whole interval $[0,T]$.

\medskip

$\bullet$ Recall that a sequence of bounded functions $(y^m)_{m\geq 0}$ from compact segment $[a,b]$ into a compact set $K$ is said to be \emph{equioscillating} if, and only if, one can associate to any positive $\varepsilon$ a finite partition $(J_k)_{0\leq k\leq r}$ of $[a,b]$ into subintervals such that ${\sf Osc}(y^m,J_k) \leq \varepsilon$, uniformly in $k,m$. The Ascoli-Arzela-type convergence theorem from Theorem 5 of section 4 of chap 0 of Aubin and Cellina's book \cite{AubinCellina}, ensure the existence of a subsequence $(y^{m_k})_{k\geq 0}$ that converges uniformly to some limit bounded function $y$. Since the family $(v^m)_{m\geq 0}$ is bounded by $\|F\|_\infty$, and equioscillating, from Corolary \ref{CorOscillation}, it has a uniformly converging subsequence, with limit $v$.

Pick then $q >p$ such that $\frac{1}{q} +\alpha >1.$ Since all the $v^m$ have the same starting point, we have the elementary interpolation bound
%\begin{align*}
%\big\|v^m-v^n\big\|_{q-\textrm{var},[0,T]}\leq \left( 2 \|v^m-v^n\|_{\infty,[0,T_0]}\right)^{\frac{p-q}{q}}\left(\|v^m\|_{p-\textrm{var},[0,T_0]}^p+\|v^n\|_{p-\textrm{var},[0,T_0]}^p\right)^{p/q},
%\end{align*}
{\color{black}
\begin{align*}
&\big\|v^m-v^n\big\|_{q-\textrm{var},[0,T_0]}\\
&\leq \left( 2 \|v^m-v^n\|_{\infty,[0,T_0]}\right)^{\frac{q-p}{q}}\left(\|v^m\|_{p-\textrm{var},[0,T_0]}+\|v^n\|_{p-\textrm{var},[0,T_0]}\right)^{p/q},
\end{align*}
}
on which we read off the convergence of a subsequence $z^{m_k}$ of the $v^m$ in $q$-variation norm to its limit, as a consequence of the uniform bound from Proposition \ref{Prop}. The convergence to $v$ of this subsequence is thus in the sense of the $\|\cdot\|_{q-\textrm{var},[0,T_0],\infty}$-norm.

\smallskip

$\bullet$ The continuity result on Young integrals recalled in
Corollary \ref{CorAppendix} from Appendix \ref{SectionAppendix}
implies then the convregence in the norm $\|\cdot\|_{\alpha,\infty}$
of $z^{m_k}$ on $[0,T_0]$ to the path $z$ defined by the equation
\begin{align*}
z_t = \xi+\int_0^t v_udx_u.
\end{align*}

$\bullet$ It remains to prove that $v_t\in F(z_t)$, for all times $0\leq t\leq T_0$. For a dyadic time $t \in \bigcup_{m\geq 0} \pi^{(m)}$, then for $k$ big enough, one has $v^{m_k}_t \in F( z^{m_k}_t)$. Then, since $F$ is $\gamma$-H\"older, one has 
\begin{align*}
d\big(v_t,F(z_t)\big) \leq \big|v_t-v^{m_k}_t\big| + d\big(F( z^{m_k}_t), F(z_t)\big) \leq \big|v_t-v^{m_k}_t\big| + \|F\|_{\gamma}\,\big| z^{m_k}_t-z_t\big|^{\gamma};
\end{align*}
so one gets $d(v_t,F(z_t))=0$, and $v_t \in F(z_t)$, since $F(z_t)$ is closed. For an non-dyadic time $t \in [0,T_0]$, there exists two consecutive points in $\pi^{(m)}$ such that $t \in [u,v[$, for every $m\geq 0$. Then,
\begin{equation*}
\begin{split}
d\big(v_t,F(z_t)\big) \leq \big|v_t -v^m_t\big| &+ \big|v^m_t- v^m_u\big|  + d\big(v^m_u, F(z^m_u)\big) + d\big( F(z^m_u), F(z_u)\big)   \\
&+ d\big(F(z_u),F(z_t)\big),
\end{split}
\end{equation*}
while we have from Corollary \ref{CorOscillation} and Corollary \ref{CorUnifEstimateZ}
\begin{align*}
d\big(v_t,F(z_t)\big) \leq \big|v_t -v^m_t\big| + (T_0{\color{black}2^{-m}})^{\gamma \beta} + \|F\|_{\gamma}\big|z^m_u-z_u\big|^{\gamma} + \|F\|_{\gamma}\big|z_u-z_t\big|^{\gamma}
\end{align*}
and
\begin{equation*}
\begin{split}
d\big(v_t,F(z_t)\big) \leq &\big\|v-v^{m_k}\big\|_{\infty, [0,T]} + (T_0{\color{black}2^{-m_k}})^{\gamma \beta} \\
&+ \|F\|_{\gamma} \Big(\big\|z-v^{m_k}\big\|_{\infty, [0,T]} +\|z\|_{\alpha,[0,T]}(T_0{\color{black}2^{-m_k}})^{\gamma \alpha} \Big).
\end{split}
\end{equation*}
So one gets $d(v_t,F(z_t))=0$, and $v_t \in F(z_t)$, since $F(z_t)$ is closed.

%------------------------------%
\section{A selection result}
\label{SectionSideSelection}
%------------------------------%

As a by product of the proof of Theorem \ref{ThmMain}, we obtain the selection result of Theorem~\ref{ThmSideSelection} wich partially answers Remark 8.1 of Chistyakov and Galkin's work \cite{Chistyakov}. This section is dedicated to proving Theorem~\ref{ThmSideSelection}.

\medskip

We use the same notations an in Section \ref{SectionProof}. Define for each non-negative integer $m$ the partition $\pi^m := \{t^m_i\}_{i=0..2^m}$ of the interval $[0,1]$, with $t^m_i := i2^{-m}$. We define as follows a path $x^m: [0,1]\mapsto\bbR^d$, on each sub-interval $[0,t_i^m)$, recursively over $i$.
\begin{itemize}
\item 
For $m=0$, set $x^0(t)= x_0$, for all $0\leq t\leq 1$.

\item For $m \geq 1$, set first $x^m(t)= x_0$, on $[0,t_1^m)$, and assuming $x^m$ has been constructed on the time interval $[0,\tau)$, set $x^m_T=x^m_{\tau-2^{-m}}$, if $\tau= T$, otherwise choose $x^m_{\tau} \in F(\tau)$ such that 
\begin{align*}
d\Big(x^m_{\tau}; x^m_{s(\tau)}\Big) \leq \|F\|_{\gamma} 2^{(-M(\tau)+1)\gamma}
\end{align*}
and set $x^m_t=x^m_{\tau}$, for $\tau \leq t<\tau +2^{-m}$.
\end{itemize}

We first prove that we have for all $r,m$ the estimate
\begin{align}
\label{eq-osc-1}
\max_{t \in \pi^r}\sup_{s \in [t,t+ 2^{-r})} \big|x_{s,t}^m\big| \leq \frac{ \|F\|_{\gamma}}{ 1-2^{-\gamma}}\,2^{-r\gamma}.
\end{align}
By construction, $x^m$ is constant on the each {\color{black} interval} of $\pi^m$, so if $r\geq m, t\in \pi^r$ and $s\in [t,t+ 2^{-r})$, then $x_{s,t}^m=0$ and \eqref{eq-osc-1} holds true. Let then consider the case where $r <m$. We define a finite sequence of times $(s_i)_{i=0,...,m-r}$, with $s_0=t$, such that for $0\leq i \leq (m-r-1)$, we have $s_{i+1}=s_i+ 2^{-r-i-1}$, if $s_i+ 2^{-r-i-1} \leq s$, and $s_{i+1}=s_i$ otherwise. Then, for $s_i \in \pi^{r+i}$, for $i\geq 1$, and either $s(s_{i+1})=s_i$ or $s_{i+1}=s_i$. The path $x^m$ is constructed in such a way as to have
\begin{align*}
\big|x^m_{s_i,s_{i+1}}\big| \leq \|F\|_{\gamma} 2^{-(r+i-1)\gamma}.
\end{align*}
Summing these estimates for $0\leq i\leq m-r$, gives \eqref{eq-osc-1}, and 
\begin{align}\label{eq-osc-2}
\max_{t \in \pi^r} \textsf{Osc}\big(x^m, [t,t+ 2^{-r})\big) \leq \frac{2 \|F\|_{\gamma}}{ 1-2^{-\gamma}}\,2^{-r\gamma}.
\end{align}
Since $x^m $ is constant along the sub-intervals of the partition $\pi^m$ it is enough, to compute the $p$-variation of $x^m$ along any partition $\pi=\{s_i,~~i=0,...,N\}$ of $[0,1]$, to assume that all the partition times $s_i\in\pi^{(m)}$. Let then define as follows be a finite partition $(K_j)_{j=0,...,m}$ of $\{ 0,\dots,N-1\}$, with possibly empty sets ~:
\begin{align*}
I_0 &:= \Big\{\ell\in \{0,..,{\color{black}N-1}\}; \exists j \in \{0, ...,2^m-1\} , (s_\ell, s_{\ell+1}] \subset [t_j^m, t_{j+1}^m)\Big\},\\
I_1 &:= \Big\{\ell\in \{ 0,\dots,N-1\};\;2^{-1} \in (s_\ell,s_{\ell+1}]\Big\},   \\
I_j &:= \Big\{ \ell\in \{ 0,\dots,N-1\}; \; \exists t\in \pi^j\setminus \pi^{j-1},~~t \in (s_\ell,s_{\ell+1}]\Big\},\quad 2\leq j\leq m,
\end{align*}
with $K_0:=I_0$ and 
\begin{align*}
K_{j+1} &:= I_{j+1}\setminus \left( \bigcup_{k=0}^j I_k\right).
\end{align*}
For $\ell\in K_1,$ using \eqref{eq-osc-2} for $r=0$ we bound 
\begin{align}\label{18-0-x}
\big|x^m_{s_{\ell+1}}-x^m_{s_\ell}\big| \leq \frac{2 \|F\|_{\gamma}}{ 1-2^{-\gamma}} .
\end{align}
For $\ell\in K_j,$ then there exists $t \in \pi^j \setminus \pi^{j-1}$ such that 
\begin{align*}
{\color{black}s_{\ell} <t \leq s_{\ell+1},}
\end{align*}
and any $u \in \pi^{j-1}$ satisfies either {\color{black}$u\leq s_{\ell}$ or $u >s_{\ell+1}$}. Then  
\begin{align*}
[s_\ell,s_{\ell+1}) \subset [t-2^{-j},t+{\color{black}2^{-j}}),
\end{align*} and using inequality \eqref{eq-osc-2} for $j-1$, we see that
\begin{align}\label{18-1-x}
\big|x^m_{s_{\ell+1}}-x^m_{s_\ell}\big| \leq \frac{2 \|F\|_{\gamma}}{ 1-2^{-\gamma}}\,2^{-(j-1)\gamma}.
\end{align}
The number of indices $\ell\in K_j$ is at most $2^{j}$. Using the fact that the family $(K_j)_{1\leq j\leq m}$ defines a partition of {\color{black}$\{0,\dots,N-1\}$}, and adding inequalities \eqref{18-0-x} and \eqref{18-1-x} for $q >1/ \gamma$, we then have
\begin{align*}
\sum_{i=0}^{N-1} \big|x^m_{s_{i+1}} -x^m_{s_i}\big|^q &\leq  \left(\frac{2 \|F\|_{\gamma} 2^{\gamma}}{ 1-2^{-\gamma}}\right)^q \sum_{j=0}^{m} 2^j {\color{black} 2^{-j\gamma q}}\\
&\leq   \left(\frac{2 \|F\|_{\gamma} 2^{\gamma}}{ 1-2^{-\gamma}}\right)^q \frac{ 1}{ 1- 2^{-\gamma  q+1}}
\end{align*}
and we derive 
\begin{align*}
 \|x^m \|_{q-\textrm{var}} \leq \frac{2 \|F\|_{\gamma} 2^{\gamma}}{ 1-2^{-\gamma}} \frac{ 1}{ (1- 2^{-\gamma  q+1})^{1/q}}.
\end{align*}
From Helly' selection principle, see e.g. Theorem 6.1 in Chistyakov and Galkin's work \cite{Chistyakov}, the sequence $x^m$ has a convergent subsequence in $p$-variation, for any $p>q$. We identify the path $f$ from the statement as such a limit. One proves that $f$ is a selection of $F$ in the same way as we proved that $v_t\in F(z_t)$ in Section \ref{SectionLocalGlobal}, using the fact that $F(t)$ is closed for all times, and the regularity properties of $F$.
%\end{proof}

%--------------------------------------------%
\section{Rough differential inclusions}
\label{SectionRDI}
%--------------------------------------------%

This section is dedicated to the proof of Theorem \ref{ThmRDIUSD} and Theorem \ref{ThmRDI}. We refer the reader to Appendix \ref{SectionAppendix} for basics on rough paths theory. All we need to know is recalled there. The reader will notice that the proof of Theorem \ref{ThmRDIUSD} is much shorter than the proof of Theorem \ref{ThmRDI}. This is due to the fact that we somehow assume much more in Theorem \ref{ThmRDIUSD}, asking in particular that the set-valued drift has compact \textit{convex} images. Together with the assumed upper semicontinuity, this allows for the use of powerful approximate selection theorems that greatly simplify the matter. See e.g. Section 1 of Chapter 2 in \cite{AubinCellina} for the differential inclusion case.

%%------------------------------------------%%
\subsection{Upper semicontinuous drift}
\label{SubsectionUSC}
%%------------------------------------------%%

This section is dedicated to proving Theorem \ref{ThmRDIUSD}; we work in this section in the setting of that statement. Here, $\rX$ is an $\alpha$-H\"older weak geometric rough path with $\alpha\in (1/3,1/2]$ and $G\in {\color{black}C^{2,\gamma}_b}$ with $\alpha(2+\gamma)>1$. 
We recall that according to the notation introduced in the Introduction, $C^1([0,T],\R^d)$ is the space of $\R^d$-valued Lipschitz paths
defined on $[0,T]$. Given $x\in C^1([0,T],\R^d)$, we denote by $\psi(t,x)$ the solution path to the rough differential equation
\begin{equation} \label{EqUSCDrift}
y_t = \xi + x_t + \int_0^t G(y_s)d{\bf X}_s,
\end{equation}
with fixed initial condition $\xi$. The controlled solution path comes under the form of a pair $\big(y_t,G(y_t)\big)$. Classical results from rough paths theory ensure that $\psi$ is a continuous function from $C^{1}([0,T],\R^d)$ with values in the space $C^\alpha([0,T],\mathbb{R}^d)$ --  \textcolor{black}{see e.g. Theorem 3 of \cite{Coutin-Lejay}}. This is where we need the assumption that $G$ is $(2+\gamma)$-H\"older, rather than just $(1+\gamma)$-H\"older, as in the proof of Theorem \ref{ThmRDI} given in the next section. For each $0\leq t\leq T$, one has
\begin{align*}
\psi(t,x) = \psi(t,\overline{x}),
\end{align*}
for any other $\overline{x}\in C^1([0,T],\R^d)$ that coincides with $x$ on
the time interval $[0,t]$. We can then work in the setting of
differential inclusions with memory -- see e.g. Section 7 in Chapter 4
of Aubin \& Cellina's book \cite{AubinCellina}. Let us define 
\begin{align*}
\mathcal{F}(t,x) := F\big(t,\psi(t,x)\big).
\end{align*}
The function $\mathcal{F}$ is upper semicontinuous on
$\mathbb{R}_+\times C^1([0,T],\R^d)$, with values in the set of convex compact subsets of $\mathbb{R}^d$. We can then use the obvious variant of Theorem 1 in Section 7 in Chapter 4 in \cite{AubinCellina}, with no constraint and $K(t)=\mathbb{R}^d$ for all $t$ with the notations therein, to get the existence of a time $T^*\in (0,T]$, and a Lipschitz path $x$, defined on the time interval $[0,T^*]$, such that one has for almost all $t\in [0,T^*]$
\begin{align*}
\dot{x}_t \in \mathcal{F}\big(t,x\big).
\end{align*}
This condition is equivalent to saying that the path $y$ from \eqref{EqUSCDrift} solves the rough differential inclusion
\begin{align*}
dy_t \in F(t,y_t) + G(y_t)d{\bf X}_t.
\end{align*}
(Note that the convexity assumption on the pointwise images of $F$ is essential for the use Theorem 1 in Section 7 in Chapter 4 of Aubin-Cellina's book \cite{AubinCellina}.)

%%------------------------------------------%%
\subsection{Lower semicontinuous drift}
\label{SubsectionUSC}
%%------------------------------------------%%

This section is dedicated to proving Theorem \ref{ThmRDI}. We take advantage in that task of the approach to Filipov's theorem given by Bressan in \cite{bressan88}, based on a selection theorem of independent interest. We recall the setting before embarking on the proof of Theorem \ref{ThmRDI}.

\medskip

Recall also that a cone of $\R^m$ is a subset $\Gamma$ of $\R^m$ such that 
\begin{align*}
\Gamma\cap (-\Gamma)=\{0\},
\end{align*}
and $\lambda a\in\Gamma$, if $\lambda\geq 0$ and  $a\in\Gamma$. As an example, given a positive constant $M$, the set
\begin{align*}
\Gamma_M := \Big\{(t,x)\in \R\times\R^d\,;\,t\geq 0,\;\norm{x}\leq tM\Big\}\subset \R^{d+1}
\end{align*}
is a cone of $\R^{d+1}$.

\begin{defn*}
Let $\Gamma$ be a cone of $\mathbb{R}^m$. A map $h:\R^{m}\rightarrow\R^{d}$ is said to be \emph{$\Gamma$-continuous at point $a\in\R^m$}, if for any $\epsilon>0$, there is $\delta>0$ such that
\begin{align*}
\big\|h(b)-h(a)\big\| < \epsilon,
\end{align*}
for any $b\in B(a,\delta)\cap (a+\Gamma)$. 
\end{defn*}

We say that $h$ is $\Gamma$-continuous on a subset $S\subset\mathbb{R}^m$, if $h$ is $\Gamma$-continuous at any point of $S$. The relevance of the notion of $\Gamma$-continuity in the setting of differential inclusions is a consequence of the following selection result, due to Bressan, Theorem 1 in \cite{bressan88}.

\begin{thm}   \label{thm:bressan}
Let $H:\R^m\rightarrow \dot 2^{\R^d}$ be lower semicontinuous
set-valued map with non-empty closed values. Then, for any cone $\Gamma\subset\R^m$, there is a $\Gamma$-continuous selection of $H$.
\end{thm}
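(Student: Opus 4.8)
The plan is to prove Theorem~\ref{thm:bressan} by constructing the $\Gamma$-continuous selection through a dyadic refinement of the space $\R^m$, mirroring the iterative constructions used throughout the paper (as in the proofs of Theorem~\ref{ThmMain} and Theorem~\ref{ThmSideSelection}). The key idea is that $\Gamma$-continuity is a directional, one-sided continuity requirement: we only need $h$ to be continuous when we approach $a$ from within the translated cone $a+\Gamma$. This asymmetry is exactly what lets us get around the classical obstruction that lower semicontinuous maps with nonconvex values need not admit genuinely continuous selections (the obstruction recalled via \cite{Chistyakov} in the Young case). First I would fix a locally finite dyadic stratification of $\R^m$: for each scale $k$, cover $\R^m$ by small cubes (or balls) of diameter $\sim 2^{-k}$, and order these strata so that the cone direction is respected.

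The heart of the argument uses lower semicontinuity to propagate selected values. Concretely, having chosen a value $h(a)\in H(a)$ at a representative point $a$ of a cube at scale $k$, lower semicontinuity of $H$ guarantees that for every $w'\in H(a)$ and every neighborhood $V(w')$, nearby points $b$ satisfy $H(b)\cap V(w')\neq\emptyset$. I would exploit this to choose, for each point $b$ in the cone $a+\Gamma$ that is close to $a$, a value $h(b)\in H(b)$ that lies within $\epsilon_k$ of $h(a)$, where $\epsilon_k\to 0$ as $k\to\infty$. The cone structure is essential here: because $\Gamma\cap(-\Gamma)=\{0\}$ and $\Gamma$ is closed under nonnegative scaling, approaching any point from within the cone forces us along a consistent ``forward'' direction, so the successively refined choices can be made compatibly without the contradictory constraints that a two-sided (fully continuous) selection would impose. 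One builds $h$ so that at scale $k$ its oscillation along cone-admissible increments is controlled by a summable sequence $\sum_k\epsilon_k<\infty$, which yields $\Gamma$-continuity in the limit.

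The main obstacle I expect is ensuring \emph{global consistency} of the pointwise choices: selections made in overlapping strata, or at points lying in the cones of several earlier-chosen representatives, must be reconciled so that the final map is well-defined and genuinely $\Gamma$-continuous everywhere, not merely at the construction points. This is a partition-of-refinement bookkeeping problem analogous to the greedy partition $\{K_j\}$ used in the proof of Proposition~\ref{Prop}, and it is where the care must go. I would handle it by a careful well-ordering of the dyadic cubes compatible with the partial order induced by $\Gamma$ (so that each point's value depends only on earlier, ``cone-predecessor'' choices), together with a uniform modulus-of-continuity estimate of the form $\|h(b)-h(a)\|\leq\sum_{k\geq k_0}\epsilon_k$ whenever $b\in(a+\Gamma)\cap B(a,2^{-k_0})$.

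Since this is a cited external result (Theorem~1 of \cite{bressan88}), in the paper itself the cleaner route is simply to invoke Bressan's theorem directly rather than reprove it; the sketch above records how the proof goes for the reader who wants the mechanism. I would therefore state it as quoted and immediately turn to \emph{using} it: the payoff for Theorem~\ref{ThmRDI} is that one applies Theorem~\ref{thm:bressan} with $\Gamma=\Gamma_M$ for a suitable $M$ tied to the bound $\|F(t,a)\|\leq L$, obtaining a $\Gamma_M$-continuous selection of the composed set-valued map, from which the absolutely continuous control path $x$ with $\dot x_t\in F(t,z_t)$ is built by integration along the cone-admissible direction $t\mapsto(t,z_t)$.
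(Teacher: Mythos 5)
The paper does not prove this statement: it is quoted verbatim as Theorem 1 of Bressan \cite{bressan88}, so your final decision to invoke the citation rather than reprove the result is exactly what the paper does. Your preliminary sketch is only heuristic --- the ``global consistency'' issue you flag is the real mathematical content of Bressan's argument (the cone neighbourhoods $(a+\Gamma)\cap B(a,\delta)$ generate a finer, zero-dimensional metrizable topology on $\R^m$ to which a Michael-type selection theorem for closed, non-convex values applies) --- but since the paper offers no proof either, there is nothing further to compare it against.
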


For a positive finite constant $L$, and $(\beta,\xi)\in (0,1]\times\R^d$, set
\begin{align*}
\mathcal{E}^{\beta}_L := \Big\{ u\in\CO\,;\,\|u\|_\beta\leq L,~~u_0=\xi \Big\},
\end{align*}
where $\|u\|_\beta$ is the $\beta$-H\"older norm of $u$, for $0<\beta<1$, and $\|u\|_1$ is the Lipschitz norm of $u$. The set $\mathcal{E}^{\beta}_L$ is a compact subset of $\big(\CO, \|\cdot\|_\infty\big)$. We do not emphasize the dependence on $\xi$ and $T$ in the notation.

\begin{lem}   \label{lem:pertu-select}
Assume the set-valued map $F$ is defined on a {\color{black}closed subset} $\Omega$ of $\R\times\R^d$ where it is lower semicontinuous and takes values in the set $\mathcal{K}(\R^d)$ of nonempty compact subsets of $\R^d$. Assume further that there is a positive constant $L$ such that $[0,T]\times B\big(\xi,LT^\beta\big)\subset\Omega$, and 
\begin{align*}
\big\|F(t,x)\big\|\leq L,
\end{align*}
for all $(t,x)\in [0,T]\times B\big(\xi,LT^\beta\big)$. Then there exists a continuous map 
\begin{align*}
\phi : \mathcal{E}_L^\beta \rightarrow \mathcal{E}_L^1
\end{align*}
such that one has 
\begin{equation}   \label{EqInclusionLemmaRDI}
\frac{d\phi(u)(t)}{dt}\in F(t,u(t)),\quad\text{for almost all } t\in [0,T],
\end{equation}
for any $u\in \mathcal{E}_L^\beta$.
\end{lem}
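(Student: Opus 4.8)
The plan is to construct the map $\phi$ by combining Bressan's $\Gamma$-continuous selection theorem (Theorem \ref{thm:bressan}) with a fixed-time integration, exploiting the cone $\Gamma_M$ defined just above the lemma statement. The key observation is that if $u\in\mathcal{E}_L^\beta$, then the graph $t\mapsto (t,u(t))$ has increments controlled by $\|u\|_\beta\leq L$, so that the point $(t,u(t))$ stays inside the forward cone $(s,u(s))+\Gamma_M$ for an appropriate choice of the cone aperture $M$. Concretely, I would first pass from the base point $(t,a)\in[0,T]\times\R^d$ to a single lower semicontinuous map $H$ on $\R^{d+1}$ (or on a suitable closed set, extended by the boundedness hypothesis) whose values are the nonempty compact --- hence closed --- sets $F(t,a)$. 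Applying Theorem \ref{thm:bressan} to $H$ with the cone $\Gamma=\Gamma_M$ produces a $\Gamma_M$-continuous selection $h$ of $F$, meaning $h(t,a)\in F(t,a)$ with $\|h\|\leq L$ by the boundedness assumption.

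Next I would define $\phi$ by integrating the selection along the graph of $u$: namely
\begin{align*}
\phi(u)(t):=\xi+\int_0^t h\big(s,u(s)\big)\,ds.
\end{align*}
Because $\|h\|\leq L$ everywhere on $[0,T]\times B(\xi,LT^\beta)$, the integrand is bounded by $L$, so $\phi(u)$ is Lipschitz with constant at most $L$ and starts at $\xi$; this gives $\phi(u)\in\mathcal{E}_L^1$, and one must check that the trajectory $\phi(u)(t)$ remains in $B(\xi,LT^\beta)$ so that the hypotheses stay valid (this follows since $\|\phi(u)(t)-\xi\|\leq Lt\leq LT\leq LT^\beta$ for $T\leq 1$, after fixing the relation between $\beta$ and the aperture $M$). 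The inclusion \eqref{EqInclusionLemmaRDI} then holds at every Lebesgue point of the integrand, hence almost everywhere, directly from $h(t,u(t))\in F(t,u(t))$.

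The main obstacle --- and the place where the cone structure is genuinely used --- is proving that $\phi$ is \emph{continuous} from $\big(\mathcal{E}_L^\beta,\|\cdot\|_\infty\big)$ to $\big(\mathcal{E}_L^1,\|\cdot\|_\infty\big)$. The difficulty is that the selection $h$ need not be continuous in the ordinary sense, only $\Gamma_M$-continuous, so one cannot simply pass $u_n\to u$ through the integral. The idea is that for $u_n\to u$ uniformly with all $u_n,u\in\mathcal{E}_L^\beta$, the $\beta$-Hölder bound forces the perturbed graph points $(s,u_n(s))$ to lie in the cone $(s,u(s))+\Gamma_M$ translated appropriately (or within a controlled cone neighborhood), so that $\Gamma_M$-continuity of $h$ yields $h(s,u_n(s))\to h(s,u(s))$ pointwise. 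I would then invoke dominated convergence (the integrands are uniformly bounded by $L$) to conclude $\phi(u_n)(t)\to\phi(u)(t)$, uniformly in $t$ since all the $\phi(u_n)$ are $L$-Lipschitz and an Arzelà-Ascoli/equicontinuity argument upgrades pointwise to uniform convergence. The delicate point to get right is the precise geometric relation between the aperture $M$ of the cone, the Hölder exponent $\beta$, and the constant $L$, ensuring that uniform closeness of $u_n$ to $u$ together with the shared Hölder bound actually places the evaluation points inside the cone where $\Gamma_M$-continuity can be applied.
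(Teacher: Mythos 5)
Your construction of $\phi$ coincides with the paper's: extend $F$ by $B(0,L)$ off $\Omega$ to a lower semicontinuous $F^*$, take a $\Gamma_M$-continuous selection $f$ with $M>L$ via Theorem \ref{thm:bressan}, and set $\phi(u)(t)=\xi+\int_0^t f(s,u_s)\,ds$; the membership $\phi(u)\in\mathcal{E}^1_L$ and the a.e.\ inclusion are then immediate, as you say. The gap is in the continuity step. You claim that for $u_n\to u$ uniformly in $\mathcal{E}^\beta_L$, the $\Gamma_M$-continuity of $f$ gives $f(s,u_n(s))\to f(s,u(s))$ pointwise. It does not: $\Gamma_M$-continuity at $(s,u(s))$ only controls $f$ at points of $(s,u(s))+\Gamma_M$, i.e.\ at points $(s',a)$ with $s'\geq s$ and $\|a-u(s)\|\leq M(s'-s)$. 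The perturbed evaluation point $(s,u_n(s))$ has the \emph{same} time coordinate, and $(0,x)\in\Gamma_M$ forces $x=0$; so unless $u_n(s)=u(s)$ the point $(s,u_n(s))$ never lies in the cone based at $(s,u(s))$, and no pointwise convergence can be extracted this way. A directionally continuous selection can genuinely jump under an equal-time spatial perturbation, however small.

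The repair --- and this is the actual content of the paper's proof --- is to compare $f(t,v_t)$ and $f(t,u_t)$ not with each other directly but through a common anchor $(t_i,u_{t_i})$ at a strictly \emph{earlier} time: if $t-t_i$ is bounded below in terms of $\delta/(M-L)$ and $\|v-u\|_\infty\leq\delta$, then $\|v_t-u_{t_i}\|\leq\delta+L(t-t_i)^\beta\leq M(t-t_i)^\beta$, and likewise for $u_t$, so both evaluation points sit in the cone issued from the anchor and $\Gamma_M$-continuity applies to both. This forces one to discard, for each anchor, a small initial interval $J_i$ where $t-t_i$ is too small relative to $\delta$, and to cover $[0,T]$ up to measure $\epsilon/(8L)$ by finitely many such anchored intervals (Bressan's Lemma 1). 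The conclusion is the quantitative bound $\|\phi(v)-\phi(u)\|_\infty\leq\epsilon$ whenever $\|v-u\|_\infty\leq\delta$, obtained by splitting the integral into the good set (integrand at most $\epsilon/(2T)$) and the exceptional set of measure $O(\epsilon/L)$ (integrand at most $2L$). Note finally that since the anchors and exceptional sets depend on $\delta$, hence on $n$, this argument does not yield a.e.\ pointwise convergence of the integrands, so dominated convergence is not the right tool even after the cone issue is fixed.
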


\begin{proof}
We first extend $F$ into a lower semicontinuous map $F^*$ defined on $\R\times\R^d$, setting 
\begin{align*}
F^*(t,x)=B(0,L),
\end{align*}
if $(t,x)\notin \Omega$. Pick $M>L$, and use Theorem~\ref{thm:bressan} to pick a $\Gamma_M$-continuous selection $f$ of $F^*$. For $u\in\mathcal{E}^\beta_L$, and $0\leq t\leq T$, set
\begin{align*}
\phi(u)(t) := \xi+\int_0^t f(s,u_s)ds.
\end{align*}
We have $\norm{f(t,u_t)}\leq L$, since $u\in\mathcal{E}^\beta_L$, and given the assumptions on $F$. So $\phi(u)\in\mathcal{E}^1_L$, and one has indeed the inclusion \eqref{EqInclusionLemmaRDI} for almost all times.

One can proceed as follows to show the continuity of the map $\phi$. Since $f$ is $\Gamma_M$-continuous, one can associate to any time $t\in [0,T]$ a positive $\eta_t$ such that for all $s\in [t,t+\eta_t]\textcolor{black}{\cap[0,T]}$ and $\norm{a-u_t}\leq  M\eta_t$,  one has
\begin{align}   \label{eq:f-eps}
\big\| f(s,a) - f(t,u_t) \big\| \leq \frac{\epsilon}{4T}.
\end{align}
Write $\textsc{Leb}$ for Lebesgue measure on the real line. From Lemma 1 in Bressan's work \cite{bressan88}, there is a finite family $\left(\left[t_i,t_i+\eta_{t_i}^{\frac{1}{\beta}}\right)\right)_{1\leq i\leq N}$ of disjoint intervals, such that
\begin{align}   \label{eq:measI}
\textsc{Leb}\left( \bigcup_{i=1}^N \left[t_i,t_i+\eta_{t_i}^{\frac{1}{\beta}}\right)\right)\geq T-\frac{\epsilon}{8L}.
\end{align}
Define 
\begin{align*}
\delta := \min\left(\left(\frac{\epsilon}{8NL}\right)^\alpha,\min_{1\leq i\leq N}\frac{\eta_{t_i}}{2}\right)(M-L),
\end{align*}
and set 
\begin{align*}
J_i := \left[t_i,t_i+\left(\frac{\delta}{M-L}\right)^{\frac{1}{\alpha}}\right).
\end{align*}
Then
\begin{align}   \label{eq:JL}
\textsc{Leb}\left(\bigcup_{i=1}^N J_i\right) \leq  N\left(\frac{\delta}{M-L}\right)^{\frac{1}{\alpha}}\leq \frac{\epsilon}{8L}.
\end{align}
Pick now $v\in\mathcal{E}^\beta_L$ such that $\|u-v\|_{\infty} \leq \delta$. For
\begin{align*}
t\in \left( \bigcup_{i=1}^N \big[t_i,t_i+\eta_{t_i}^{\frac{1}{\beta}}\big)\right) \setminus \left(\bigcup_{i=1}^N J_i\right) =: I\backslash J,
\end{align*}
one has $t\in \Big[t_i+\left(\frac{\delta}{M-L}\right)^{\frac{1}{\alpha}},t_i+\eta_{t_i}^{\frac{1}{\alpha}}\Big)$, for an index $i$, and
\begin{align*}
\big\| v(t) - u(t_i)\big\| &\leq \big\|v(t)-u(t)\big\| + \big\|u(t)-u(t_i)\big\|   \\ 
&\leq \delta+L(t-t_i)^\beta   \\
&\leq M(t-t_i)^\beta
\end{align*}
while
\begin{align*}
\big\|u_t-u_{t_i}\big\| \leq L(t-t_i)^\beta < M(t-t_i)^\beta.
\end{align*}
Thus, with \eqref{eq:f-eps} we obtain that, for $t\in I\backslash J$, we have
\begin{align*}
\big\| f(t,v_t)-f(t,u_t) \big\| &\leq \big\|f(t,v_t)-f(t_i,u_{t_i})\big\| + \big\|f(t_i,u_{t_i})-f(t,u_t)\big\|   \\
&\leq \frac{\epsilon}{2T}.
\end{align*}
Since $f$ is bounded by $L$ on $[0,T]\times B\big(z_0,LT^\beta\big)$, it follows that we have for any $t\in [0,T]$ the estimate
\begin{align*}
&\big\|\phi(v)(t)-\phi(u)(t)\big\| \\
& \leq \int_0^T \big\| f(t,v_s)-f(t,u_s) \big\|\,ds   \\
      &\leq \int_{I\setminus J} \big\|f(s,v_s)-f(s,u_s)\big\|\,ds + \int_{J\bigcup [0,T]\setminus I} \big\|f(s,v_s)-f(t,u_s)\big\|\,ds   \\
      &\leq T\frac{\epsilon}{2T} + \Big(\textsc{Leb}(J) + \textsc{Leb}([0,T]\setminus I)\Big)\,2L   \\
      &\leq \frac{\epsilon}{2} + \left(\frac{\epsilon}{8L}+\frac{\epsilon}{8L}\right)2L\leq \epsilon.
\end{align*}
\end{proof}

We use from now on the notations on controlled paths recalled in Appendix \ref{SectionAppendix}. Recall the one form $G$ in Theorem \ref{ThmRDI} is assumed to be {\color{black}$C^{1,\gamma}_b$, with  $1<\alpha(2+\gamma)<3\alpha$}. {\color{black} Choose a constant $\beta\in (1/3,\alpha)$ such that $\beta(2+\gamma)>	1$.}

We define a ball in the space of paths controlled by $X\in C^\alpha\big([0,T],\R^\ell\big)$, using the $\beta$-norm
\begin{align*}
B^\beta_L := \Big\{(y,y')\in \mathcal{D}^{\beta,2\beta}([0,T],\R^d)\,;\,\norm{(y,y')}_{\beta,2\beta}\leq L,~y_0=\xi, \,y'_0=G(\xi)\Big\}.
\end{align*}
It follows from Ascoli-Arzela theorem that this ball is a compact convex subset of 
\begin{align*}
C^0\big([0,T],\R^d\big)\times C^0\big([0,T],L(\R^\ell,\R^d)\big),
\end{align*}
with both factor endowed with the uniform norm. For $(y,y')\in \mathcal{D}^{\beta,2\beta}([0,T],\R^d)$, 
and $t\in [0,T]$, we set
\begin{align*}
\Phi(y,y')(t) := \left(\phi(y)(t)+\int_0^t G(y_s)d{\rX}_s,\, G(y_t)\right).
\end{align*}
We prove below that $\Phi$ is a continuous map from $B^\beta_L$ into itself, provided $T$ is small enough. See point \emph{(a)} for the \textit{stability} and point \emph{(b)} for the \textit{continuity}. It follows then from Schauder fixed point theorem that $\Phi$ has a fixed point $(\tilde{z},\tilde{z}')$ in $B^\beta_L$. One gets the fact that $(\tilde{z},\tilde{z}')\in \mathcal{D}^{\alpha,2\alpha}([0,T],\R^d)$ from the equation that it satisfies, as \eqref{eq:int_rough} yields the estimate
{\color{black}
\begin{align*}
\norm{\tilde{z}}_\alpha \leq &LT^{1-\alpha}+\normsup{G(\tilde{z})}\norm{X}_{\alpha}+\normsup{G(\tilde{z})'}\norm{\XX}_{2\alpha}\\
 &+ C\normhb{\rX}\norm{(G(\tilde{z}),G(\tilde{z})')}_{\beta,(1+\gamma)\beta}T^{(2+\gamma)\beta-\alpha},
\end{align*}
}
from which we get $\norm{\tilde{z}'}_{\alpha}\leq \norm{G}_{C^1_b}\norm{z}_\alpha$, and finally for $R^{\tilde{z}}_{st}:=\tilde{z}_{st}-\tilde{z}'_sX_{st}$

\begin{align*}
\norm{R^{\tilde{z}}}_{2\alpha}\leq &LT^{1-2\alpha}+\normsup{G(\tilde{z})'}\normhh{\XX}\\
&+C\normhb{\rX}\norm{(G(\tilde{z}),G(\tilde{z})')}_{\beta,(1+\gamma)\beta}T^{(2+\gamma)\beta-2\alpha}.
\end{align*}

The triple $\big((\tilde{z},\tilde{z}'),\phi(\tilde{z})\big)$ is then a solution of the rough differential inclusion \eqref{EqRDI}.

\medskip

\emph{(a) Stability -- } Write $R^y_{st}$ for $y_{st}-y'_sX_{st}$, for any $0\leq s\leq t\leq T$, and similary for $z$ where \textcolor{black}{$(z,z')=\Phi(y,y')$.} On the one hand, for $ (y,y')\in B^\beta_L$, one has
\begin{align*}
  \norm{y}_\beta &\leq \normsup{y'}\norm{X}_\beta +\norm{R^y}_{2\beta}T^\beta   \\
                			&\leq (\norm{y'_0}+\norm{y'}_\beta T^\beta)\norm{X}_\beta + \norm{R^y}_{2\beta}T^\beta   \\
                  			&\leq \big(\norm{G(\xi)} + LT^\beta\big)T^{\alpha-\beta}\norm{X}_\alpha + LT^\beta.
\end{align*}
We then have $y\in \mathcal{E}^\beta_L$. On another hand, according to Proposition~\ref{lem:comp:DX}, one has 
\begin{align*}
\big(G(y),G(y)'\big)\in\mathcal{D}^{\beta,{\color{black}\beta(\gamma+1)}},
\end{align*}
because $\beta\gamma < 2\beta$. Using \eqref{eq:int_rough} and the fact that $\normhhb{\XX}\leq T^{\alpha-\beta}\normhh{\XX}$, one has for any $s,t\in [0,T]$,
\begin{align*}   
 \norm{R^z_{st}} &= \norm{z_{st}-z'_sX_{st}}   \\   
  &\leq\norm{\phi(y)_{st}}+\norm{\int_s^t G(y_r)d{\rX}_r - G(y_s)X_{st}}   \\   
 &\leq  L\abs{t-s}+\normsup{G(y)'}\normhhb{\XX}\abs{t-s}^{2\beta}\\
&~~+C\normhb{\rX}\norm{(G(y),G(y)')}_{\beta,{\color{black}\beta(\gamma+1)}}\abs{t-s}^{{\color{black}\beta(2+\gamma)}}   \\
&\leq  \Big(LT^{1-2\beta} + \normsup{G(y)'}\normhh{\XX}T^{\alpha-\beta}\\
&~~+C\normhb{\rX}\norm{(G(y),G(y)')}_{\beta,{\color{black}\beta(\gamma+1)}}T^{{\color{black}\beta\gamma}}\Big)\,\abs{t-s}^{2\beta}.
\end{align*}
Using the estimate of the size of the nonlinear image of a controlled path given in \eqref{eq:comp:DX}, one has eventually for $ \normhhb{R^z}$ the upper bound
\begin{align}
\nonumber &\Big(LT^{1-2\beta}+\normCD{G}\normsup{y}\normhb{\XX}T^{\alpha-\beta}+C\normh{\rX}\norm{G}_{C^{1,\gamma}_b}(1+\norm{G(\xi)}+L)^2T^{{\color{black}\beta\gamma}}\Big)\,  \\
        \label{eq:z-z'}
  &\leq  T^{\alpha-\beta} \Big(L+\normCD{G}\normsup{y}\normhb{\XX}+C\normh{\rX}\norm{G}_{C^{1,\gamma}_b}(1+\norm{G(\xi)}+L)^2\Big)\,
\end{align}
where we use the inequalities $T^{\alpha-\beta}\geq T^{1-2\beta},T^{{\color{black}\beta\gamma}}$ in the last line. Since 
\begin{align*}
\normhhb{R^y}\leq L,
\end{align*} we also have
\begin{align}
\label{eq:supy}
\normsup{y}\leq \norm{\xi}+\norm{G(\xi)}\normhb{X}T^\beta + LT^{2\beta}.
\end{align}
Hence, we obtain from \eqref{eq:z-z'} and \eqref{eq:supy} the upper bound
\begin{equation}
\begin{split}
&T^{\alpha-\beta} \Big(L+\normCD{\sigma}\normhb{\XX}(\norm{\xi}+\norm{G(\xi)}\normhb{X}T^\beta+LT^{2\beta})\\
&+C\normh{\rX}\norm{G}_{C^{\gamma-1}_b}(1+\norm{G(\xi)}+L)^2\Big),   \label{eq:R_z}
\end{split}
\end{equation}
for $\normhhb{R^z}$. We control the $\beta$-H\"older norm of $z'$ as follows
\begin{align}
\nonumber
\normhb{z'}&\leq \normCD{G}\normhb{y}   \\
\nonumber
&\leq  \normCD{G}\Big(\normsup{y'}\normhb{X}+\normhhb{R^y}T^\beta\Big)   \\
\nonumber
&\leq \normCD{G}\Big(\big(\norm{y_0'}+\normhb{y'}T^\beta\big)\,\normh{X}T^{\alpha-\beta}+\normhhb{R^y}T^\beta\Big)   \\
\nonumber
&\leq \normCD{G}\Big(\big(\norm{G(\xi)}+LT^\beta\big)\,\normh{X}T^{\alpha-\beta}+LT^\beta\Big)   \\
&\leq \normCD{G}\Big(\big(\norm{G(\xi)}+LT^\beta\big)\,\normh{X}+L\Big)T^{\alpha-\beta}
\label{eq:z'}
\end{align}
With $T$ small enough, we obtain from \eqref{eq:R_z} and \eqref{eq:z'} that $\normDX{(z,z')}\leq L$. This shows indeed that $\Phi$ sends $B^\beta_L$ into itself.

\medskip

\emph{(b) Continuity -- } The continuity of $\Phi$ on $B^\beta_L$, for the uniform topology on $B^\beta_L$, is a direct consequence of the continuity of $\phi$ and the continuity estimate \eqref{eq:int_rough} on rough integrals.

%------------------------------------------------------%
\appendix
\section{Basics on Young and rough integrals}
\label{SectionAppendix}
%------------------------------------------------------%

Let $p\geq 1$ be given. Recall that an $\bbR^d$-valued path $x$
defined on the time interval $[a,b]$. We say that $x$ has finite
$p$-variation over that interval $[s,t]\subset [a,b]$ if 
\begin{align*}
\|y\|_{\textrm{$p$-var}, [s,t]}^p := \sup \,\sum_i | y_{s_{i+1}}-y_{s_i}|^p < \infty,
\end{align*}
where the supremum runs over the set of finite partitions $\{s_i\}$ of
the interval $[s,t]$. We denote by $V_p([a,b],\bbR^d)$ the set of
$\bbR^d$-valued path with finite $p$-variation over $[a,b]$ and by
$C^{1/p}([a,b],\R^d)$ the set of $\R^d$-valued path $\frac{1}{p}$-Hölder
continuous on $[a,b]$. Note that an element of $V_p([a,b],\bbR^d)$ need not be continuous, while $C^{1/p}([a,b]\bbR^d)\subset V_p([a,b],\bbR^d)$. Also, a continuous path with finite $p$-variation has a reparametrized version that is $\frac{1}{p}$-H\"older over its interval of definition. Refer to \cite{Chistyakov} for a reference. We endow the space of paths with finite $p$-variation with the norm
\begin{equation}
\label{EqDefnNormpVar}
\|y\|_{\textrm{$p$-var}, \infty,[a,b]} := \|y\|_{\textrm{$p$-var},[a,b]} + \|y\|_{\infty,[a,b]},
\end{equation}
where $\norm{\cdot}_{\infty,[a,b]}$ denotes the uniform norm over $[a,b]$.
Similarly, for $\alpha\in [0,1]$, we define a norm on the space of $\bbR^d$-valued $\alpha$-H\"older functions defined on the interval $[a,b]$, setting
\begin{align*}
\|x\|_{\alpha, \infty,[a,b]} := \|x\|_{\alpha, [a,b]} + \|x\|_{\infty},
\end{align*}
where $\norm{\cdot}_{\alpha,[a,b]}$ is the classical semi-norm of
$\alpha$-Hölder paths.
For these norms and semi-norms, we omit to precise the interval
$[a,b]$ when the norms and semi-norms are taken over the whole interval of the path is defined.

Given $a\leq b$, set $\Delta_{a,b}:=\{(s,t);a\leq s\leq t\leq b\}$. A \emph{control} over the interval $[a,b]$ is a map $\omega : \Delta_{a,b}\mapsto\bbR^+$ that is null on the diagonal, is non-increasing, resp. non-decreasing, as a function of each of its first, resp. second, argument, and is sub-additive
\begin{align*}
\omega(s,u)+\omega(u,t)\leq \omega(s,t),\quad\forall a\leq s\leq u\leq t\leq b.
\end{align*}
Denote by $\omega(s,t^-)$ the left limit in $t$ of the non-decreasing function $\omega(s,\cdot)$. A control is said to be \emph{regular} if it is continuous in a neighbourhood of the diagonal. As an example, for any path $y\in V_p([a,b],\bbR^d)$, the function $\omega_y(s,t):=\|y\|_{\textrm{$p$-var}, [s,t]}^{1/p}$ is a control. For an $\bbR^d$-valued $\alpha$-H\"older path $x$, with $0<\alpha<1$, the function $\omega_x(s,t):=\|x\|^{1/\alpha}_{\alpha,[s,t]}$ is a regular control.

\smallskip

From the present day perspective, Gubinelli' sewing lemma \cite{Gubinelli} offers an easy road to constructing the Young integral -- see also \cite{FdlP}. We give here a variation due to Friz and Zhang \cite{FrizZhang17}, tailor-made to our needs. Given a map $\mu : \Delta_{a,b}\mapsto\bbR^d$ and a finite partition $\pi=\{s_i\}$ of the interval $[a,b]$, set 
\begin{align*}
\mu_{\pi} := \sum_i \mu_{s_{i+1}s_i}.
\end{align*}

\begin{prop}
Let a map $\mu : \Delta_{a,b}\mapsto\bbR^d$ be given. If there exists positive exponents $\alpha_1,\alpha_2$, with $\alpha_1+\alpha_2>1$ and controls $\omega_1,\omega_2$, with
\begin{align*}
\big|\mu_{ts}-(\mu_{tu}+\mu_{us})\big| \leq \omega_1(s,u)^{\alpha_1}\,\omega_2(u,t)^{\alpha_2},   \quad \forall a\leq s\leq u\leq t\leq b,
\end{align*}
with $\omega_2$ regular, then, for any interval $[s,t]\subset [a,b]$, the $\mu_{\pi_{st}}$ converge to some limit $I_s^t(\mu)$ as the mesh of the partition $\pi_{ts}$ of $[s,t]$ tends to $0$, and one has
\begin{align*}
\big| I_s^t(\mu) - \mu_{\pi_{st}}\big| \leq C \omega_1(s,t^-)^{\alpha_1}\,\omega_2(s,t)^{\alpha_2},
\end{align*}
for some positive constant $C$ depending only on $\alpha_1,\alpha_2$.

\end{prop}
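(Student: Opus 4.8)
The plan is to realize $I_s^t(\mu)$ as the limit of the Riemann-type sums $\mu_{\pi_{st}}$ and to control the error by a coarsening (point-dropping) argument. Write $\delta\mu_{t,u,s} := \mu_{ts} - \mu_{tu} - \mu_{us}$ for the defect, so the hypothesis reads $|\delta\mu_{t,u,s}| \le \omega_1(s,u)^{\alpha_1}\omega_2(u,t)^{\alpha_2}$, and note that for the trivial two-point partition one has $\mu_{\{s,t\}} = \mu_{ts}$. Given a partition $\pi = \{s = u_0 < \cdots < u_N = t\}$, deleting an interior point $u_j$ changes the sum by exactly $\delta\mu_{u_{j+1},u_j,u_{j-1}}$, whose modulus is at most $\omega_1(u_{j-1},u_j)^{\alpha_1}\omega_2(u_j,u_{j+1})^{\alpha_2}$. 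The whole argument is driven by iterating such deletions and summing these elementary errors.

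First I would establish existence of the limit together with a crude bound. Using the super-additivity of the control $\omega := \omega_1 + \omega_2$, a pigeonhole argument produces, in any partition with $N+1 \ge 3$ points, an interior index $j$ with $\omega(u_{j-1},u_{j+1}) \le \frac{2}{N-1}\,\omega(s,t)$ (each elementary interval is counted at most twice in $\sum_j \omega(u_{j-1},u_{j+1})$). Bounding the deletion error by $\omega(u_{j-1},u_{j+1})^{\alpha_1+\alpha_2}$ and iterating the deletion down to $\{s,t\}$, the resulting numerical series $\sum_n (2/(n-1))^{\alpha_1+\alpha_2}$ converges precisely because $\alpha_1+\alpha_2 > 1$; this shows $|\mu_\pi - \mu_{ts}| \le C\,\omega(s,t)^{\alpha_1+\alpha_2}$ uniformly in $\pi$. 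Regularity of $\omega_2$ then forces the contribution of any refinement to vanish as the mesh tends to $0$ (since $\sum_i \omega_2(u_i,u_{i+1})^{\alpha_1+\alpha_2} \le \max_i \omega_2(u_i,u_{i+1})^{\alpha_1+\alpha_2-1}\,\omega_2(s,t) \to 0$), so the net $\mu_{\pi_{st}}$ is Cauchy and converges to a limit $I_s^t(\mu)$, which is additive in the sense $I_s^t = I_s^u + I_u^t$.

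The sharp estimate then requires replacing the symmetric control $\omega$ by the asymmetric product $\omega_1(s,t^-)^{\alpha_1}\omega_2(s,t)^{\alpha_2}$. Two ingredients do this. The first is a super-additive Hölder inequality: applying Hölder with conjugate exponents $\frac{\alpha_1+\alpha_2}{\alpha_1}$ and $\frac{\alpha_1+\alpha_2}{\alpha_2}$, and then using that a control raised to a power $>1$ sums to at most its value on the whole interval, one gets $\sum_{j} \omega_1(u_{j-1},u_j)^{\alpha_1}\omega_2(u_j,u_{j+1})^{\alpha_2} \le \omega_1(s,t)^{\alpha_1}\omega_2(s,t)^{\alpha_2}$. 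The second is that every deleted point is \emph{interior}, hence $u_j < t$, so the $\omega_1$-factor attached to it satisfies $\omega_1(u_{j-1},u_j) \le \omega_1(s,u_j) \le \omega_1(s,t^-)$ by monotonicity of the control; this is exactly what produces the left limit in the statement and is what lets the argument tolerate a possibly discontinuous $\omega_1$.

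The delicate point — and the step I expect to be the main obstacle — is to assemble these ingredients so that the accumulated error comes out as $C_{\alpha_1,\alpha_2}\,\omega_1(s,t^-)^{\alpha_1}\omega_2(s,t)^{\alpha_2}$ \emph{without} a logarithmic loss. Naively pulling the $\omega_1$-factor out (bounding it by $\omega_1(s,t^-)^{\alpha_1}$) and summing only the $\omega_2$-factors leaves a series $\sum_n (n-1)^{-\alpha_2}$ that diverges when $\alpha_2 \le 1$, whereas selecting the dropped point by the single control $\omega$ only yields the weaker symmetric bound of the previous step. Getting the product form therefore forces one to exploit the left/right asymmetry of the hypothesis jointly with $\alpha_1+\alpha_2 > 1$ — this is the Young--Loève mechanism underlying the Friz--Zhang variant — balancing the choice of the point to be removed so that the two factors decay together and the elementary errors telescope into a convergent series. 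Once the bound is secured on every partition, it passes to the limit $I_s^t(\mu)$ and yields the stated estimate, with $C$ depending only on $\alpha_1,\alpha_2$ through the value of that convergent series.
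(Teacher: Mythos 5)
The paper does not actually prove this proposition: it is imported verbatim from Friz--Zhang \cite{FrizZhang17} as a ready-made tool, so your attempt has to be measured against the standard proof of that result rather than against anything in the text. Measured that way, your write-up correctly sets up the point-dropping scheme and correctly diagnoses where the difficulty lies, but it stops exactly at the step that constitutes the proof: you never exhibit a choice of interior point whose deletion error is bounded by $(n-1)^{-(\alpha_1+\alpha_2)}\,\omega_1(s,t^-)^{\alpha_1}\omega_2(s,t)^{\alpha_2}$. Saying that one must ``balance the choice of the point to be removed so that the two factors decay together'' names the goal, not the mechanism. The missing ingredient is a one-line geometric-mean argument: writing $a_j=\omega_1(u_{j-1},u_j)$ and $b_j=\omega_2(u_j,u_{j+1})$ for the $n-1$ interior points, one has
\begin{align*}
\min_{1\le j\le n-1} a_j^{\alpha_1}b_j^{\alpha_2}\;\leq\;\Big(\prod_j a_j\Big)^{\frac{\alpha_1}{n-1}}\Big(\prod_j b_j\Big)^{\frac{\alpha_2}{n-1}}\;\leq\;\Big(\tfrac{1}{n-1}\sum_j a_j\Big)^{\alpha_1}\Big(\tfrac{1}{n-1}\sum_j b_j\Big)^{\alpha_2}\;\leq\;\frac{\omega_1(s,t^-)^{\alpha_1}\,\omega_2(s,t)^{\alpha_2}}{(n-1)^{\alpha_1+\alpha_2}},
\end{align*}
by AM--GM and super-additivity (the left limit appears because $\sum_j a_j\le\omega_1(u_0,u_{n-1})$ and $u_{n-1}<t$). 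Deleting that minimizing point and iterating down to $\{s,t\}$ yields the stated bound with $C=\sum_{n\ge 2}(n-1)^{-(\alpha_1+\alpha_2)}=\zeta(\alpha_1+\alpha_2)$, with no logarithmic loss. Neither of your two ``ingredients'' (pulling $\omega_1$ out by monotonicity; the H\"older bound on the \emph{sum} over $j$) substitutes for this selection of a single good $j$ at each deletion step.

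The gap propagates backwards into your convergence argument as well. Your crude bound controls the deletion error by $\omega(u_{j-1},u_{j+1})^{\alpha_1+\alpha_2}$ with $\omega=\omega_1+\omega_2$, so the comparison of a partition $\pi$ with the common refinement $\pi\cup\pi'$ produces the quantity $\sum_i\omega(u_i,u_{i+1})^{\alpha_1+\alpha_2}$, not the quantity $\sum_i\omega_2(u_i,u_{i+1})^{\alpha_1+\alpha_2}$ that you display. Since only $\omega_2$ is assumed regular, $\max_i\omega(u_i,u_{i+1})$ need not tend to $0$ with the mesh ($\omega_1$ may jump), and the Cauchy property does not follow from the symmetric bound. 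One genuinely needs the product-form local estimate $C\,\omega_1(u_i,u_{i+1}^-)^{\alpha_1}\omega_2(u_i,u_{i+1})^{\alpha_2}$ on each subinterval, after which your H\"older-plus-superadditivity inequality does give $\sum_i\omega_1^{\alpha_1}\omega_2^{\alpha_2}\le\omega_1(s,t)^{\alpha_1}\big(\max_i\omega_2(u_i,u_{i+1})^{\alpha_1+\alpha_2-1}\,\omega_2(s,t)\big)^{\alpha_2/(\alpha_1+\alpha_2)}\to 0$, using the regularity of $\omega_2$ alone. So both the existence of the limit and the sharp estimate hinge on the same missing AM--GM selection step; with it inserted, the rest of your outline goes through.
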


Given $x\in C^\alpha([a,b],\bbR^\ell)$ and $y\in V_p\big([a,b],L(\bbR^\ell,\bbR^d)\big)$, set
\begin{align*}
\mu_{st} := y_s(x_t-x_s),
\end{align*}
and note that 
\begin{align*}
\mu_{ts}-(\mu_{tu}+\mu_{us}) = (y_s-y_u)(x_t-x_u), \quad s\leq u\leq t,
\end{align*}
so one has for any interval $[s,t]$ the estimate
\begin{align*}
\big|\mu_{ts}-(\mu_{tu}+\mu_{us})\big| \leq \|y\|_{\textrm{$p$-var}, [s,t]}^{1/p} \, \|x\|_{\alpha,[s,t]}^\alpha.
\end{align*}

\begin{cor}
\label{CorAppendix}
If $\frac{1}{q}+\alpha>1$, the Riemann sums $\mu_{\pi_{st}}$ associated with the preceding two-index map $\mu$ converge to some limit which we denote by $\int_s^t y_udx_u$. One has
\begin{align*}
\left\|\int_0^\cdot y_udx_u\right\|_{\alpha,[s,t]} \lesssim \|y\|_{q-\textrm{var},[s,t]}\,\|x\|_{\alpha,[s,t]},
\end{align*}
and for any $\epsilon>0$ with $\frac{1-\epsilon}{q}+\alpha>1$, and $y,y'\in V_q\big([a,b],L(\bbR^\ell,\bbR^d)\big)$, one has
\begin{align*}
\left\|\int_0^\cdot y_udx_u - \int_0^\cdot y'_udx_u\right\|_{\alpha,[s,t]} &\lesssim \|y-y'\|_{\infty, [s,t]}^\epsilon\\
&\times\Big(\|y\|^{1-\epsilon}_{q-\textrm{var}, \infty, [s,t]}+ \|y'\|^{1-\epsilon}_{q-\textrm{var}, \infty, [s,t]}\Big)\,\|x\|_{\alpha,[s,t]}.
\end{align*}
\end{cor}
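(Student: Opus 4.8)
The plan is to obtain both assertions directly from the sewing Proposition stated immediately above, applied to the two-index map $\mu_{st} := y_s(x_t-x_s)$, whose defect $\mu_{st}-(\mu_{su}+\mu_{ut}) = (y_s-y_u)(x_t-x_u)$ has already been computed. First I would package the two factors of this defect as powers of controls on a fixed interval $[s,t]$: for the $y$-factor the super-additive control $\omega_1(a,b):=\|y\|_{q\text{-var},[a,b]}^{q}$, with exponent $\alpha_1=\tfrac1q$, so that $\omega_1(a,b)^{1/q}=\|y\|_{q\text{-var},[a,b]}\geq|y_a-y_b|$; for the $x$-factor the regular control $\omega_2(b,c):=\|x\|_{\alpha,[s,t]}^{1/\alpha}(c-b)$, with exponent $\alpha_2=\alpha$, so that $\omega_2(b,c)^{\alpha}=\|x\|_{\alpha,[s,t]}(c-b)^{\alpha}\geq|x_c-x_b|$. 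The hypothesis $\tfrac1q+\alpha>1$ is exactly $\alpha_1+\alpha_2>1$, so the Proposition applies and yields both the limit $\int_s^t y_u\,dx_u$ and the sewing bound
\[
\left|\int_s^t y_u\,dx_u-y_s(x_t-x_s)\right|\leq C\,\omega_1(s,t^-)^{1/q}\omega_2(s,t)^{\alpha}\leq C\,\|y\|_{q\text{-var},[s,t]}\,\|x\|_{\alpha,[s,t]}\,(t-s)^{\alpha}.
\]

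For the first displayed estimate I would take the trivial partition $\pi=\{s,t\}$, for which $\mu_{\pi_{st}}=y_s(x_t-x_s)$, and add the elementary bound $|y_s(x_t-x_s)|\leq|y_s|\,\|x\|_{\alpha,[s,t]}(t-s)^{\alpha}$ to the sewing bound above. Dividing by $(t-s)^{\alpha}$ and taking the supremum over subintervals gives the $\alpha$-H\"older control of $\int_0^\cdot y_u\,dx_u$; the contribution of the anchor term $y_s(x_t-x_s)$ is absorbed by the uniform norm of $y$, so the estimate holds with the full norm $\|\cdot\|_{q\text{-var},\infty}$ appearing on the right-hand sides.

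The continuity estimate is where the real work lies. By linearity $\int_0^\cdot y_u\,dx_u-\int_0^\cdot y'_u\,dx_u=\int_0^\cdot w_u\,dx_u$ with $w:=y-y'$, so it suffices to bound $\|\int_0^\cdot w_u\,dx_u\|_{\alpha,[s,t]}$ in the interpolated form. The key device is the interpolation inequality between the uniform and the $q$-variation norms: setting $q':=q/(1-\epsilon)$ and using $|w_{s_{i+1}}-w_{s_i}|^{q'}\leq(2\|w\|_\infty)^{q'-q}|w_{s_{i+1}}-w_{s_i}|^{q}$ in each summand, one obtains
\[
\|w\|_{q'\text{-var},[s,t]}\leq 2^{\epsilon}\,\|w\|_{\infty,[s,t]}^{\epsilon}\,\|w\|_{q\text{-var},[s,t]}^{1-\epsilon}.
\]
The condition $\tfrac{1-\epsilon}{q}+\alpha>1$ is precisely $\tfrac{1}{q'}+\alpha>1$, which licenses the first estimate with $q'$ in place of $q$ applied to $w$. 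Substituting the interpolation bound, then using $\|w\|_{q\text{-var},\infty}\leq\|y\|_{q\text{-var},\infty}+\|y'\|_{q\text{-var},\infty}$ together with the subadditivity $(a+b)^{1-\epsilon}\leq a^{1-\epsilon}+b^{1-\epsilon}$ for $1-\epsilon\in(0,1)$, produces exactly the stated right-hand side. I expect the main obstacle to be precisely this interpolation step and the attendant bookkeeping of exponents needed to align $q'$ with the hypothesis $\tfrac{1-\epsilon}{q}+\alpha>1$; once that is set up, everything collapses onto the single sewing bound from the Proposition.
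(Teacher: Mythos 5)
Your proof is correct and follows exactly the route the paper intends: the corollary is left as an immediate consequence of the sewing Proposition applied to $\mu_{st}=y_s(x_t-x_s)$ with the controls you chose, and the continuity estimate via the interpolation $\|w\|_{q'\text{-var}}\leq (2\|w\|_{\infty})^{\epsilon}\|w\|_{q\text{-var}}^{1-\epsilon}$ with $q'=q/(1-\epsilon)$ is the same device the paper itself uses later for $\|v^m-v^n\|_{q\text{-var}}$. Your remark that the anchor term forces the full norm $\|y\|_{q\text{-var},\infty,[s,t]}$ in the first estimate is also correct and points to a minor imprecision in the statement as printed.
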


We refer the reader to the lecture notes \cite{LyonsStFlour, frizhairer, BailleulLN} for pedagogical accounts of rough paths theory. The following definitions and propositions will be sufficient for our needs here.

\begin{defn*}   
%\label{def:roughpaths}
Let $\alpha\in (1/3,1/2)$. We say that $\rX:=(X,\XX)$ is an $\alpha$-Hölder rough path if
\begin{itemize}
    \item $X$ is an $\R^\ell$-valued $\alpha$-H\"older path on a time interval $[0,T]$,
    
    \item $\XX$ is a path from $[0,T]^2$ to $\R^\ell\otimes\R^\ell$ such
      that
      \begin{align*}
      \norm{\XX}_{2\alpha} := \sup_{s,t\in [0,T], s\ne t}\frac{\norm{\XX_{st}}}{|t-s|^{2\alpha}}<+\infty,
      \end{align*}
    \item we have
       \begin{align*}
       \XX_{rt}-\XX_{rs}-\XX_{st}=X_{rs}\otimes X_{st},
       \end{align*}
       for all $0\leq r\leq s\leq t\leq T$.
      \end{itemize}
\end{defn*}

The formula 
\begin{align*}
\norm{\rX}_\alpha:=\norm{X}_\alpha+\norm{\XX}_{2\alpha}
\end{align*}
defines a seminorm on the space of $\alpha$-H\"older rough paths.

\begin{defn*}   
%\label{def:wc}
Let $\alpha\in (0,1]$, $\theta>\alpha$ and $X\in C^{\alpha}([0,T],\R^\ell)$. 
A \emph{path}
\begin{align*}
y\in C^{\alpha}([0,T],\R^d)
\end{align*}
 is said \emph{controlled by $X$}, with a remainder of order $\theta$, if there is a path 
\begin{align*}
y'\in  C^{\theta-\alpha}\big([0,T],L(\R^\ell,\R^d)\big),
\end{align*}
and  a map $R : [0,T]^2\rightarrow \R^d$, such that 
\begin{align*}
\norm{R}_{\theta} := \sup_{s,t\in [0,T], t\ne s}\frac{\norm{R^y_{s,t}}}{\abs{t-s}^\theta}<\infty.
\end{align*}
and
\begin{align*}
y_{st} = y'_sX_{st} + R_{st}.
\end{align*}
\end{defn*}

We denote by $\mathcal{D}^{\alpha,\theta}([0,T],\R^d)$ the space of such pairs $(y,y')$, and talk of $(y,y')$ as a controlled path. We make no reference to the reference path $X$ in the notation as there is no risk of confusion. We define a semi-norm on $\mathcal{D}^{\alpha,\theta}([0,T],\R^d)$ setting
\begin{align*}
\norm{(y,y')}_{\alpha,\theta} := \norm{y'}_{{\color{black} \theta-\alpha}} + \norm{R^y}_{\theta};
\end{align*}
a norm is defined by the formula
\begin{align*}
\norm{(y,y')}_{\alpha,\theta}^* := \norm{y_0}+\norm{y'_0} + \norm{(y,y')}_{\alpha,\theta}.
\end{align*}
The next proposition says that the class of controlled paths is stable by nonlinear maps. See Corollary 3 and Proposition 4 of Gubinelli's original statement \cite{Gubinelli}.

\begin{prop}   \label{lem:comp:DX}
For $(y,y')\in \mathcal{D}^{\alpha,\theta}([0,T],\R^d)$, and $f\in
C^{1,\epsilon}_b$ with $\epsilon\in (0,1]$, one has 
\begin{align*}
\big(f(y), f(y)'\big) := \big(f(y),Df(y)y'\big)\in \mathcal{D}^{\alpha,\theta'},
\end{align*}
with $\theta' := \min(\theta,\alpha(1+\epsilon))$, and 
\begin{equation}   \label{eq:comp:DX}
\begin{split}
\big\|(&f(y) , f(y)')\big\|_{\alpha,\theta'}  \lesssim_T \norm{f}_{C^{1,\epsilon}_b}\\
&\left(\norm{y_0'}+\norm{(y,y')}_{\alpha,\theta}+\left[\norm{y_0'}
+\norm{(y,y')}_{\alpha,\theta}\right]^{1+\epsilon}+\left[\norm{y_0'}+\norm{(y,y')}_{\alpha,\theta}\right]^{\theta'/\alpha}\right),
\end{split}\end{equation}
for an implicit multiplicative constant depending only on $T$, that decreases to $0$ when $T$ goes to $0$. 
\end{prop}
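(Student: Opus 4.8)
The plan is to verify directly the three defining properties of a controlled path for the candidate pair $(f(y),Df(y)y')$, reading off the quantitative bound \eqref{eq:comp:DX} along the way. Write $z:=f(y)$ and $z':=Df(y)y'$, and set $A:=\norm{y_0'}+\norm{(y,y')}_{\alpha,\theta}$. The two easy properties come first: since $f\in C^{1,\epsilon}_b$ and $y\in C^\alpha$, the mean value estimate $|f(y)_{st}|\le\normsup{Df}\,|y_{st}|$ gives $z\in C^\alpha$ with $\normh{z}\le\normsup{Df}\normh{y}$, and $z'\in C^{\theta'-\alpha}$ will follow from the increment computation below. The substance is the analysis of the remainder $R^z_{st}:=z_{st}-z'_sX_{st}$.

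The first real step is the algebraic decomposition of $R^z$. Inserting the controlled expansion $y_{st}=y'_sX_{st}+R^y_{st}$ into a first-order Taylor expansion of $f$ at $y_s$ and collecting the $X_{st}$ coefficient yields
\begin{align*}
R^z_{st}=Df(y_s)R^y_{st}+\big[f(y_t)-f(y_s)-Df(y_s)y_{st}\big].
\end{align*}
The first summand is bounded by $\normsup{Df}\,\norm{R^y}_\theta\,|t-s|^\theta$, so it sits at order $\theta$. For the bracket I would use the $C^{1,\epsilon}$ Taylor remainder bound $|f(b)-f(a)-Df(a)(b-a)|\le\norm{Df}_\epsilon\,|b-a|^{1+\epsilon}$ with $a=y_s,\,b=y_t$, giving the bound $\norm{Df}_\epsilon\,\normh{y}^{1+\epsilon}\,|t-s|^{\alpha(1+\epsilon)}$, so this term sits at order $\alpha(1+\epsilon)$. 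The two orders combine to the advertised remainder order $\theta'=\min(\theta,\alpha(1+\epsilon))$.

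For the Gubinelli derivative I would split the increment $z'_{st}=Df(y_t)y'_t-Df(y_s)y'_s$ as
\begin{align*}
z'_{st}=Df(y_t)(y'_t-y'_s)+\big(Df(y_t)-Df(y_s)\big)y'_s,
\end{align*}
estimating the first piece by $\normsup{Df}\,\norm{y'}_{\theta-\alpha}\,|t-s|^{\theta-\alpha}$ (order $\theta-\alpha$) and the second by $\norm{Df}_\epsilon\,\normh{y}^\epsilon\,\normsup{y'}\,|t-s|^{\alpha\epsilon}$ (order $\alpha\epsilon$); the minimum of the two exponents is precisely $\theta'-\alpha$, so $z'\in C^{\theta'-\alpha}$. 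Because $\theta'\le\theta$ and $\theta'\le\alpha(1+\epsilon)$, re-measuring every contribution at the common orders $\theta'$ and $\theta'-\alpha$ costs only nonnegative powers of $T$ (namely $T^{\theta-\theta'}$ and $T^{\alpha(1+\epsilon)-\theta'}$), which is the mechanism producing the $T$-dependence of the implicit constant.

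The last step is to convert the auxiliary quantities $\normh{y}$ and $\normsup{y'}$ into the controlled norm. Using $\normsup{y'}\le\norm{y'_0}+\norm{y'}_{\theta-\alpha}T^{\theta-\alpha}$ and then $\normh{y}\le\normsup{y'}\norm{X}_\alpha+\norm{R^y}_\theta T^{\theta-\alpha}$, both are $\lesssim_T A$. Substituting, the degree-one contributions (from $Df(y)R^y$ and from $Df(y)(y'_t-y'_s)$) yield the term $A$, while the Taylor and $Df$-Hölder contributions, being of total degree $1+\epsilon$ in $\normh{y}$ and $\normsup{y'}$, yield $A^{1+\epsilon}$; the extra term $A^{\theta'/\alpha}$ equals $A^{1+\epsilon}$ when $\theta'=\alpha(1+\epsilon)$ and is otherwise harmless, since $1<\theta'/\alpha\le1+\epsilon$ forces $A^{\theta'/\alpha}\le A+A^{1+\epsilon}$. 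I expect the main obstacle to be purely bookkeeping: carrying the two competing Hölder exponents through the case distinction $\theta'=\theta$ versus $\theta'=\alpha(1+\epsilon)$, and collecting the various powers of $T$ so that the constant has the stated dependence. The analytic inputs — Taylor's theorem with $C^{1,\epsilon}$ remainder and the elementary interpolation $\normh{y},\normsup{y'}\lesssim_T A$ — are routine.
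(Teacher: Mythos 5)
Your argument is correct and is essentially the standard proof: the paper itself gives no argument for this proposition, referring instead to Corollary 3 and Proposition 4 of Gubinelli's paper, and your decomposition $R^{z}_{st}=Df(y_s)R^y_{st}+\big[f(y_t)-f(y_s)-Df(y_s)y_{st}\big]$ together with the splitting of $z'_{st}$ is exactly the argument found there. The only caveats are cosmetic and concern the statement rather than your proof: the implicit constant also absorbs $\norm{X}_\alpha$ (through your bound $\normh{y}\le\normsup{y'}\norm{X}_\alpha+\norm{R^y}_\theta T^{\theta-\alpha}$), and, as your bookkeeping makes visible, when $\theta'=\theta$ the factor $T^{\theta-\theta'}=1$ does not vanish as $T\to 0$, so the paper's claim that the constant decreases to $0$ should be read as ``stays bounded''.
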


For a controlled path $(y,y')\in\mathcal{D}^{\alpha,\theta}([0,T],\R^d)$, with $\alpha+\theta>1$, it is elementary to see that the two-index map 
\begin{align*}
\mu_{st} := y_sX_{st} + y'_s\XX_{st},
\end{align*}
satisfies the estimate
\begin{align*}
\big|\mu_{st}-\mu_{su}-\mu_{ut}\big| \lesssim |t-s|^{\alpha+\theta},
\end{align*}
for all $0\leq s\leq u\leq t\leq T$, so one can use the sewing lemma to make sense of the integral
\begin{align*}
\int_0^\cdot y_sd{\bf X}_s
\end{align*}
as the additive functional associated to $\mu$. It satisfies, for any $s,t\in[0,T]$, the estimate
\begin{align}
\label{eq:int_rough}
\norm{\int_s^ty_rd{\rX}_r-y_sX_{st}-y'_s\XX_{st}} \lesssim \normh{\rX}\norm{(y,y')}_{\alpha,\theta}\abs{t-s}^{\alpha+\theta}.
\end{align}

\section*{Acknowledgement}
The second author of this article thanks the Center for Mathematical Modeling, Conicyt fund AFB 170001.

\bigskip

\bigskip
\bigskip

\noindent \textcolor{gray}{$\bullet$} {\sf I. Bailleul} -- {\small Institut de recherche mathématiques de Rennes, CNRS UMR 6625, Université de Rennes, 263 Avenue du General Leclerc
35042 RENNES, France.}

\hspace{1.6cm}{\it email:} ismael.bailleul@univ-rennes1.fr   \vspace{0.2cm}

\noindent \textcolor{gray}{$\bullet$} {\sf A. Brault} -- {\small  MAP5, CNRS UMR 8145, Université de Paris, Paris, France, and Center for Mathematical Modeling, UMI CNRS 2807, University of Chile, Chile.}

\hspace{1.6cm}{\it email:} abrault@dim.uchile.cl   \vspace{0.2cm}

\noindent \textcolor{gray}{$\bullet$} {\sf L. Coutin} -- {\small Institut de Mathématiques de Toulouse, CNRS UMR 5219, Université Paul Sabatier, 118 Route 	de Narbonne, 31062 Toulouse Cedex 9, France.}

\hspace{1.6cm}{\it email:} laure.coutin@math.univ-toulouse.fr

\end{document}